\newtheorem{thm}{Theorem}[section]
\newtheorem{prop} [thm]{Proposition}
\newtheorem{lem} [thm]{Lemma}
\newtheorem{coro}[thm]{Corollary}
\newtheorem*{thm*}{Theorem}
\theoremstyle{definition}
\newtheorem{defin}[thm]{Definition}
\theoremstyle{remark}
\newtheorem{rem}[thm]{Remark}
\newtheorem{exem}[thm]{Example}
\def\Z{\mathbb{Z}}
\def\N{\mathbb{N}}
\def\A{\mathbb{A}}
\def\OO{\mathcal{O}}
\def\1{\mathbbm{1}}
\def\id{\mathrm{id}} 
\newcommand{\aone}{\A^1}
\newcommand{\mot}{\aone}
\newcommand{\pone}{\mathbb{P}^1}
\def\gm{\mathbb{G}_m} 
\def\spec#1{\mathrm{Spec}(#1)}
\def\sKM{\mathbf{K}^{\mathrm{M}}} 
\def\sKMW{\mathbf{K}^{\mathrm{M\hspace{-.2ex}W}}} 
\newcommand{\tch}[1]{%
  \mathchoice{\widetilde{\mathrm{CH}}^{\raisebox{-.5ex}{$\scriptstyle#1$}}}
             {\widetilde{\mathrm{CH}}^{\raisebox{-.8ex}{$\scriptstyle#1$}}}
             {}
             {}
}
\newcommand{\tchi}[2]{%
  \mathchoice{\widetilde{\mathrm{CH}}_{#2}^{\raisebox{-.5ex}{$\scriptstyle#1$}}}
             {\widetilde{\mathrm{CH}}_{#2}^{\raisebox{-.7ex}{$\scriptstyle#1$}}}
             {}
             {}
}
\def\ch#1#2{\tch{#1}(#2)}
\def\chst#1#2#3#4{\tchi{#1}{#2}(#3,#4)}
\def\H{\mathrm{H}}
\def\tZ{\tilde\Z}
\def\cor#1{\widetilde{\mathrm{Cor}}_{#1}}
\def\Zar{\mathrm{Zar}}
\def\Nis{\mathrm{Nis}}
\def\sh#1#2{\widetilde{\mathrm{Sh}}_{#2,#1}}
\def\Cstar{\mathrm{C}^{\mathrm{sing}}_*}
\def\CstarS{\mathrm{C}^{\mathrm{0}}_*}
\DeclareMathOperator{\Der}{D}
\newcommand{\DAe}{\Der_{\A^1}^{\mathrm{eff}}(k,R)}
\newcommand{\DMte}{\widetilde{\mathrm{DM}}{}^{\mathrm{eff}}\!(k,\Z[1/e])}
\newcommand{\DMteR}{\widetilde{\mathrm{DM}}{}^{\mathrm{eff}}\!(k,R)}
\newcommand{\DMteZ}{\widetilde{\mathrm{DM}}{}^{\mathrm{eff}}\!(k)}
\newcommand{\DMe}{\mathrm{DM}^{\mathrm{eff}}(k,R)}
\newcommand{\DA}{\Der_{\A^1}(k,R)}
\newcommand{\DMt}{\widetilde{\mathrm{DM}}(k)}
\newcommand{\DMtR}{\widetilde{\mathrm{DM}}(k,R)}
\newcommand{\DMtinv}{\widetilde{\mathrm{DM}}(k,\Z[1/e])}
\newcommand{\DM}{\mathrm{DM}(k,R)}
\newcommand{\SH}{\mathrm{SH}(k)}
\newcommand{\SHS}{\mathrm{SH}^{S^1}(k)}
\newcommand{\Spt}{\mathrm{SH}}
\newcommand{\eff}{{\mathrm{eff}}}
\DeclareMathOperator*{\colim}{colim}
\DeclareMathOperator*{\hocolim}{hocolim}
\newcommand{\heart}{\heartsuit}
\newcommand{\derL}{\mathbf{L}}
\begin{document}

\title{On the effectivity of spectra representing motivic cohomology theories}
\author{Tom Bachmann \and Jean Fasel}
\date{\today}

\begin{abstract}
Let $k$ be an infinite perfect field. We provide a general criterion for a spectrum $E\in \SH$ to be effective, i.e. to be in the localizing subcategory of $\SH$ generated by the suspension spectra $\Sigma_T^{\infty}X_+$ of smooth schemes $X$.
As a consequence, we show that two recent versions of generalized motivic cohomology theories coincide.
\end{abstract}

\maketitle

\pagenumbering{arabic}



\section*{Introduction}

In \cite{bachmann-very-effective}, the first author undertook the study of the very effective slice spectral sequence of Hermitian $K$-theory, which could be seen as a refinement of the analogue in motivic homotopy theory of the famous Atiyah-Hirzebruch spectral sequence linking singular cohomology with topological $K$-theory. He observed that the generalized slices were 4-periodic and consisting mostly of well understood pieces, such as ordinary motivic cohomology with integral and mod 2 coefficients. However, there is a genuinely new piece given by a spectrum that he called \emph{generalized motivic cohomology}. Thus, Hermitian $K$-theory can be ``understood'' in terms of ordinary motivic cohomology and generalized motivic cohomology in his sense. Even though he was able to deduce abstractly some properties for this motivic cohomology, some questions remained open.

On the other hand, different generalizations of ordinary motivic cohomology recently appeared in the literature, always aimed at understanding better both the stable homotopy category of schemes and its ``abelian'' version. First, Garkusha-Panin-Voevodsky developed the formalism of framed correspondences and its linear version. Among many possible applications, this formalism allows to define an associated motivic cohomology, the first computations of which were performed in \cite{Neshitov14}. Second, Calm\`es-D\'eglise-Fasel introduced the category of finite MW-correspondences and its associated categories of motives (\cite{Calmes14b,Deglise16}) and performed computations allowing to recast most of the well-known story in the ordinary motivic cohomology in this new framework. Third, Druzhinin introduced the category of GW-motives (\cite{Druzhinin17a}) producing yet another version of motivic cohomology.

This flurry of activity leads to the obvious question to know the relations between all these theories, paralleling the situation at the beginnings of singular cohomology. This is the question we address in this paper with a quite general method. To explain it, note first that all these motivic cohomologies are represented by ring spectra in the motivic stable homotopy category (of $\mathbb{P}^1$-spectra) $\SH$. This category is quite complicated, but the situation becomes much better if the ring spectra are in the localising subcategory $\SH^{\eff}$ generated by the image of the suspension spectrum functor $\Sigma_T^{\infty}:\SHS\to \SH$. This category is endowed with a $t$-structure (\cite[Proposition 4]{bachmann-very-effective}) whose heart is much easier to understand than the heart of the (usual) $t$-structure of $\SH$. Moreover, many naturally occurring spectra turn out to be in this heart. Thus, our strategy is to prove that the relevant spectra are in $\SH^{\eff}$, or \emph{effective}, then show that they are represented by objects in the heart, and finally compare them via the natural maps linking them. Unsurprisingly, the first step is the hardest and the main part of the paper is devoted to this point. The criterion we obtain is the following (Theorem \ref{thm:eff-crit}).

\begin{thm*}
Let $E \in \SH$, where $k$ is a perfect field. Then $E \in \SH^\eff$ if and only if for all $n \ge 1$ and all finitely generated fields $F/k$, we have $(E \wedge \gm^{\wedge n})(\hat \Delta^\bullet_F) \simeq 0$.
\end{thm*}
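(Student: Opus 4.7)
The proof is an equivalence, with the $(\Leftarrow)$ implication being the substantial one. Throughout, I would exploit the adjunction between $\SH^{\eff}$ and $\SH$ and use Morel's structure theorem for strictly $\A^1$-invariant sheaves over a perfect field.

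\textbf{Direction $(\Rightarrow)$.} The class of $E \in \SH$ satisfying the stated vanishing is closed under small colimits, shifts, and extensions, because $E \mapsto (E \wedge \gm^{\wedge n})(\hat\Delta^\bullet_F)$ is built from smash products, evaluation, and a totalization that is compatible with these operations in the present setting. Hence it suffices to verify the condition on a generating family for $\SH^{\eff}$, namely $E = \Sigma_T^{\infty} X_+$ for $X$ smooth. For such $E$, the smash $\Sigma_T^{\infty} X_+ \wedge \gm^{\wedge n}$ is effective and ``$n$-Tate-connective''. Combining Morel's connectivity theorem with the fact that $\hat\Delta^\bullet_F$ models (in the appropriate pro-sense) the generic point of the Nisnevich site of smooth $k$-schemes yields the vanishing.

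\textbf{Direction $(\Leftarrow)$.} Since $\SH^{\eff}$ is a presentable localizing subcategory of $\SH$, the inclusion admits a right adjoint $r : \SH \to \SH^{\eff}$, giving a functorial cofiber sequence
\[
r E \longrightarrow E \longrightarrow L E, \qquad L E \in (\SH^{\eff})^\perp.
\]
It suffices to show $L E \simeq 0$. By the already proved implication $(\Rightarrow)$ applied to $rE$, and the cofiber sequence, $LE$ inherits the hypothesis. So $LE$ lies in $(\SH^{\eff})^\perp$ and satisfies $(LE \wedge \gm^{\wedge n})(\hat\Delta^\bullet_F) \simeq 0$ for all $n \geq 1$ and $F/k$ finitely generated. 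To detect $LE = 0$, test against the generating set $\{\Sigma^p \Sigma_T^{\infty} X_+ \wedge \gm^{\wedge q}\}$ of $\SH$, with $X$ smooth and $p, q \in \Z$. For $q \geq 0$ the test object lies in $\SH^{\eff}$, so the mapping spectrum into $LE$ vanishes by right-orthogonality. For $q = -n < 0$, adjunction turns the test into a mapping spectrum into $LE \wedge \gm^{\wedge n}$, and the hypothesis plus Morel's unramified theorem (available because $k$ is perfect) promote the vanishing on $\hat\Delta^\bullet_F$ to vanishing on all smooth $X$: the homotopy sheaves of $LE \wedge \gm^{\wedge n}$ are strictly $\A^1$-invariant, hence unramified, hence determined by their values on the generic points, which are precisely what $(\cdot)(\hat\Delta^\bullet_F)$ computes.

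\textbf{Main obstacle.} The step I expect to be hardest is this last point: passing from the spectrum-valued vanishing on the pro-simplicial $\hat\Delta^\bullet_F$ to the global vanishing of all the Nisnevich-local homotopy sheaves of $LE \wedge \gm^{\wedge n}$. This requires a careful identification of the cosimplicial totalization with a Gersten/Rost--Schmid-style computation of stalks, and it is here that the perfect field hypothesis is used in full force, via Morel's machinery for strictly $\A^1$-invariant sheaves. One must also verify that the relevant spectral sequence / $t$-structure on $\SH^{\eff}$ (from \cite[Proposition 4]{bachmann-very-effective}) is compatible with these computations, so that the combined vanishing of the $q = 0$ and $q < 0$ pieces truly exhaust the homotopy of $LE$.
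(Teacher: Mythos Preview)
Your proposal contains a genuine error that breaks both directions: the claim that $(E \wedge \gm^{\wedge n})(\hat\Delta^\bullet_F)$ computes the stalks of the homotopy sheaves of $E \wedge \gm^{\wedge n}$ at the generic point $\spec F$. This is not what this object is. Evaluation at $\spec F$ alone gives the stalks of $\underline{\pi}_{*}$; by contrast, $(E \wedge \gm^{\wedge n})(\hat\Delta^\bullet_F)$ is the geometric realization over the whole semilocal cosimplicial scheme, and its homotopy groups compute something quite different, namely the sections of the \emph{zeroth slice} $s_0(E \wedge \gm^{\wedge n})$ at $F$. This is the content of Levine's identification of the slice filtration with the homotopy coniveau tower (\cite[Theorems 7.1.1 and 9.0.3]{levine2008homotopy}), and it is not a formal consequence of unramifiedness of strictly $\A^1$-invariant sheaves. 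So in your $(\Leftarrow)$ argument, the hypothesis does \emph{not} give you vanishing of $\underline{\pi}_*(LE \wedge \gm^{\wedge n})$ at generic points; it gives you $s_0(LE \wedge \gm^{\wedge n}) \simeq 0$, which is a much weaker statement about a single graded piece. Likewise, in your $(\Rightarrow)$ argument, Morel's $\A^1$-connectivity theorem concerns the homotopy $t$-structure (the $S^1$-degree), not Tate-connectivity, and does not by itself produce the vanishing on $\hat\Delta^\bullet_F$.

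The paper's proof proceeds entirely through slices and avoids the orthogonal-complement decomposition. Lemma \ref{lemm:s0-vanishing} packages Levine's theorem as the equivalence $s_0(E)\simeq 0 \Leftrightarrow E(\hat\Delta^\bullet_F)\simeq 0$ for all $F$. Then the hypothesis becomes $s_0(E\wedge\gm^{\wedge n})\simeq 0$ for all $n\ge 1$; since $s_0(E\wedge\gm^{\wedge n})\simeq s_{-n}(E)\wedge\gm^{\wedge n}$, this says $s_{-n}(E)\simeq 0$ for all $n\ge 1$, i.e.\ $f_0(E)\simeq f_{-n}(E)$ for all $n\ge 0$. Passing to the colimit and using Lemma \ref{lemm:eff} gives $f_0(E)\simeq\hocolim_n f_{-n}(E)\simeq E$, so $E$ is effective. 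Your argument can be repaired by replacing the ``unramified sheaves determine everything'' step with this slice interpretation, but once you do that the detour through $LE$ becomes redundant: you will end up re-proving $f_0(LE)\simeq\hocolim_n f_{-n}(LE)\simeq LE$, which is the paper's argument applied to $LE$ rather than $E$.
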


In the statement, $\hat \Delta^\bullet_F$ denotes the essentially smooth cosimplicial scheme whose component in degree $n$ is the semi-localization at the vertices of the standard algebraic $n$-simplex over $F$. Making sense of $(E \wedge \gm^{\wedge n})(\hat \Delta^\bullet_F)$ requires some contortions which are explained in Section \ref{sec:general-criterion}. The appearance of $\hat \Delta^\bullet_F$ is explained by the need to compute the zeroth (ordinary) slice of a spectrum, using Levine's coniveau filtration (\cite{levine2008homotopy}).

Having this criterion in the pocket, the last two (much easier) steps of our comparison theorem take place in the proof of our main result (Theorem \ref{thm:comparison}).

\begin{thm*}
Let $k$ be an infinite perfect field of exponential characteristic $e \ne 2$ and let 
\[
M: \SH \leftrightarrows  \widetilde{\mathrm{DM}}(k): U
\]
be the canonical adjunction. Then the spectrum $U(\1)$ representing MW-motivic cohomology with $\Z$-coefficients is canonically isomorphic to the spectrum $H\tZ$ representing abstract generalized motivic cohomology with $\Z$-coefficients.
\end{thm*}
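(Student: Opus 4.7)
The plan is to execute the three-step strategy sketched in the introduction: first establish that $U(\1) \in \SH^{\eff}$, then identify it with an Eilenberg-MacLane spectrum in the heart of the effective $t$-structure on $\SH^{\eff}$, and finally recognize the underlying homotopy module as $\tZ$, thereby obtaining the canonical isomorphism with $H\tZ$. The comparison map will come from the universal property of Eilenberg-MacLane objects in the heart of the $t$-structure, applied to the truncation of $U(\1)$.

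The main effort goes into the first step, for which I would apply the effectivity criterion of Theorem \ref{thm:eff-crit} with $E = U(\1)$. Concretely, one must verify that $(U(\1) \wedge \gm^{\wedge n})(\hat\Delta^\bullet_F) \simeq 0$ for every $n \ge 1$ and every finitely generated field extension $F/k$. Using that $U$ is lax monoidal and that $M$ sends $\gm$ to its MW-motivic analogue, the evaluation $(U(\1) \wedge \gm^{\wedge n})(X)$ rewrites in the appropriate range of degrees as MW-motivic cohomology of $X$ in weight $n$. For $X = \hat\Delta^m_F$, MW-motivic cohomology admits a Rost-Schmid / Gersten-type description by Milnor-Witt K-theory of residue fields at points (see \cite{Calmes14b,Deglise16}); feeding this into the cosimplicial structure of $\hat\Delta^\bullet_F$ and applying a Suslin-style contracting homotopy in the simplicial direction reduces the desired vanishing to contractibility of the Suslin complex of $\gm^{\wedge n}$ with Milnor-Witt coefficients. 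This is the main technical obstacle of the argument, and it is also the place where the hypotheses that $k$ is infinite and that $e \neq 2$ really enter, through the Gersten resolution and the comparison between the various available versions of MW-motivic cohomology.

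Once effectivity of $U(\1)$ is granted, I would compute the effective homotopy sheaves $\underline{\pi}_i^{\eff}(U(\1))$ via the adjunction $M \dashv U$: on a smooth $X$ the relevant sections identify with MW-motivic cohomology of $X$, which is concentrated in non-positive degrees and whose zeroth sheaf on henselian local schemes is exactly $\tZ$. Hence $U(\1)$ lies in the heart of the effective $t$-structure and $\underline{\pi}_0^{\eff}(U(\1)) \simeq \tZ$ canonically. Since $H\tZ$ is by construction the Eilenberg-MacLane spectrum of $\tZ$ in this heart, the universal property of $t$-structure truncation produces a canonical isomorphism $U(\1) \simeq H\tZ$ in $\SH^{\eff}$, and hence in $\SH$ via the fully faithful inclusion.
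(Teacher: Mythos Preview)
Your three-step architecture --- effectivity via Theorem~\ref{thm:eff-crit}, then identification in the heart of the effective $t$-structure --- is exactly the paper's strategy, and your heart step is essentially the paper's: the unit $\1 \to U(\1)$ truncates to $H\tZ \simeq \tau_{\le 0}^{\eff}\1 \to \tau_{\le 0}^{\eff}U(\1) \simeq U(\1)$ between objects of $\SH^{\eff,\heart}$, and one checks it on $\underline{\pi}_{0,0}$ using $H^{0,0}(X,\tilde\Z)=\sKMW_0(X)$ for $X$ local.

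The gap is in your treatment of the vanishing $(U(\1)\wedge\gm^{\wedge n})(\hat\Delta^\bullet_F)\simeq 0$. You invoke a Rost--Schmid/Gersten description of MW-motivic cohomology on $\hat\Delta^m_F$ and then a ``Suslin-style contracting homotopy''. The paper does not use Rost--Schmid here at all, and it is not clear how a Gersten resolution would produce the needed acyclicity of the \emph{total} complex over $\hat\Delta^\bullet_F$; knowing the cohomology on each $\hat\Delta^m_F$ individually does not give you a contracting homotopy in the simplicial direction. What the paper actually does splits into two genuinely distinct ingredients that you have collapsed. First (Section~\ref{sec:rational}), one shows that the representable \emph{presheaf} $\tilde{\mathrm{c}}(\gm^{\times n})/\tilde{\mathrm{c}}(1,\dots,1)$ is rationally contractible in Suslin's sense, so its Suslin complex dies on $\hat\Delta^\bullet_F$ by Proposition~\ref{prop:suslin}. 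Second (Section~\ref{sec:semilocal}), since this representable presheaf is \emph{not} a Nisnevich sheaf, one must pass from the presheaf-level Suslin complex to $\mathrm{L}_{\mot}\tilde\Z(n)$; this requires a comparison on semi-local schemes (Corollaries~\ref{cor:equality}, \ref{coro:compute-semilocal-suslin} and \ref{coro:compute-semilocal-suslin-2}), which in turn rests on an excision/injectivity result for homotopy invariant MW-presheaves (Theorem~\ref{thm:excision}, Corollary~\ref{cor:restriction}). Your sketch does not address the presheaf-versus-sheaf issue at all, and this is not a formality: it is precisely where the hypothesis that $k$ be infinite enters (via the geometric presentation lemmas underlying Theorem~\ref{thm:excision}), not through a Gersten resolution as you suggest.
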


The organization of the paper is as follows. We briefly survey the main properties of the category of MW-motives, before proving in Section \ref{sec:rational} that the presheaf represented by $\gm^{\wedge n}$ is rationally contractible (in the sense of \cite[\S 2]{Suslin03}) for any $n\geq 1$. Unsurprisingly, our proof follows closely Suslin's original method. However, there is one extra complication due to the fact that the presheaf represented by $\gm^{\wedge n}$ is in general not a sheaf. We thus have to compare the Suslin complex of a presheaf and the one of its associated sheaf in Section \ref{sec:semilocal}. This part can be seen as an extension of the results in \cite[\S 4]{Fasel16b} to the case of semi-local schemes, i.e. localizations of a smooth scheme at a finite number of points. The proof of our criterion for effectivity takes place in the subsequent section. Finally, we prove our comparison result in Section \ref{sec:MWmotives}, where all the pieces fall together. 

In the last few paragraphs of the article, we give some examples of applications of our results, one of them being a different way to prove the main result of \cite{Suslin03} avoiding polyrelative cohomology. 

\subsection*{Conventions}
Schemes are separated and of finite type over a base field $k$, assumed to be infinite perfect of characteristic different from $2$.
Recall that a field $k$ is said to have exponential characteristic $e=1$ if $\mathrm{char}(k) = 0$, and $e = \mathrm{char}(k)$ else.

\subsection*{Acknowledgments}

The first author would like to thank the Hausdorff research institute for mathematics, during a stay at which parts of these results where conceived. Both authors would like to thank the Mittag-Lefler Institute for a pleasant stay, where some problems related to the present paper were discussed. The authors would like to thank Maria Yakerson for comments on a draft.


\section{Recollections on MW-correspondences}\label{sec:recollections}

In this section, we briefly survey the few basic features of MW-correspondences (as constructed in \cite[\S 4]{Calmes14b}) and the corresponding category of motives (\cite[\S 3]{Deglise16}) that are needed in the paper. Finite MW-correspondences are an enrichment of finite correspondences after Voevodsky using symmetric bilinear forms. The category whose objects are smooth schemes and whose morphisms are MW-correspondences is denoted by $\cor k$ and we have a sequence of functors
\[
\mathrm{Sm}_k\stackrel{\tilde\gamma}\to \cor k\stackrel{\pi}\to \mathrm{Cor}_k
\]
such that the composite is the classical embedding of the category of smooth schemes into the category of finite correspondences. For a smooth scheme $X$, the corresponding representable presheaf on $\cor k$ is denoted by $\tilde{\mathrm{c}}(X)$. This is a Zariski sheaf, but not a Nisnevich sheaf in general (\cite[Proposition 5.11, Example 5.12]{Calmes14b}). The associated Nisnevich sheaf also has $\mathrm{MW}$-transfers (i.e. is a (pre-) sheaf on $\cor k$) by \cite[Proposition 1.2.11]{Deglise16} and is denoted by $\tilde{\Z}(X)$.

Consider next the cosimplicial object $\Delta^\bullet_k$ in $\mathrm{Sm}_k$ defined as usual (see \cite[\S 6.1]{Calmes14b}). Taking the complex associated to a simplicial object, we obtain the Suslin complex $\Cstar \tilde\Z(X)$ associated to $X$, which is the basic object of study. Applying this to $\gm^{\wedge n}$, we obtain complexes of Nisnevich sheaves $\tilde\Z\{n\}$ for any $n\in \N$ and complexes $\tilde\Z(n):=\tilde\Z\{n\}[-n]$ whose hypercohomology groups are precisely the MW-motivic cohomology groups in weight $n$. In this paper, we will also consider the cosimplicial object $\hat\Delta^\bullet_k$ obtained from $\Delta^{\bullet}_k$ by semi-localizing at the vertices (see \cite[5.1]{levine2008homotopy}, \cite[paragraph before Proposition 2.5]{Suslin03}). Given a finitely generated field extension $L$ of the base field $k$, the same definition yields cosimplicial objects $\Delta^{\bullet}_L$ and $\hat\Delta^\bullet_L$ that will be central in our results.
If $L/k$ is separable, then note that both $\Delta^{\bullet}_L$ and $\hat\Delta^\bullet_L$ are simplicial essentially smooth schemes.

The category $\cor k$ is the basic building block in the construction of the category of effective MW-motives (aka the category of MW-motivic complexes) $\widetilde{\mathrm{DM}}^{\eff}(k)$ and its $\pone$-stable version $\widetilde{\mathrm{DM}}(k)$ (\cite[\S 3]{Deglise16}). The category of effective MW-motives fits into the following diagram of adjoint functors (where $R$ is a ring)
\begin{equation}\label{eq:unstable}
\begin{split}
\xymatrix@C=30pt@R=24pt{
\DAe\ar@<+2pt>^{\derL \tilde \gamma^*}[r]
 & \DMteR\ar@<+2pt>^{\derL \pi^*}[r]
     \ar@<+2pt>^{\tilde \gamma_{*}}[l]
 & \DMe
     \ar@<+2pt>^{\pi_{*}}[l]
}
\end{split}
\end{equation}
where the left-hand category is the effective $\A^1$-derived category (whose construction is for instance recalled in \cite[\S 1]{Deglise17}).

More precisely, each category is the homotopy category of a proper cellular model category and the functors, which are defined at the level of the underlying closed model categories, are part of a Quillen adjunction. Moreover, each model structure is symmetric monoidal, the respective tensor products admit a total left derived functor and the corresponding internal homs admit a total right derived functor. The left adjoints are all monoidal and send representable objects to the corresponding representable object, while the functors from right to left are conservative. The corresponding diagram for stable categories reads as
\begin{equation}\label{eq:stable}
\begin{split}
\xymatrix@C=30pt@R=24pt{
\DA\ar@<+2pt>^{\derL \tilde \gamma^*}[r]
 & \DMtR\ar@<+2pt>^{\derL \pi^*}[r]
     \ar@<+2pt>^{\tilde \gamma_{*}}[l]
 & \DM
     \ar@<+2pt>^{\pi_{*}}[l] 
}
\end{split}
\end{equation}
and enjoys the same properties as in the unstable case.


\section{Rational contractibility}\label{sec:rational}

Recall the following definition from \cite[\S 2]{Suslin03}. For any presheaf $F$ of abelian groups, let $\tilde{C}_1F$ be the presheaf defined by
\[
\tilde{C}_1F(X)=\colim_{X\times\{0,1\}\subset U\subset X\times \A^1} F(U),
\]
where $U$ ranges over open subschemes of $X \times \A^1$ containing $X \times \{0,1\}$.
Observe that the restriction of $\tilde{C}_1F(X)$ to both $X\times \{0\}$ and $X\times\{1\}$ make sense, i.e. that we have morphisms of presheaves $i_0^*:\tilde{C}_1F\to F$ and $i_1^*: \tilde{C}_1F\to F$.

\begin{defin}
A presheaf $F$ is called rationally contractible if there exists a morphism of presheaves $s:F\to \tilde{C}_1F$ such that $i_0^*s=0$ and $i_1^*s=\id_F$.
\end{defin}

We note the following stability property.

\begin{lem} \label{lemm:rat-contractible-pullback}
Let $K/k$ be a field extension and write $p: Spec(K) \to Spec(k)$ for the associated morphism of schemes. Then $p^* \tilde{C}_1F \simeq \tilde{C}_1 p^*F$. In particular, $p^*F$ is rationally contractible if $F$ is.
\end{lem}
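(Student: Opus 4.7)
The plan is to show that both $\tilde C_1$ and $p^*$ are given by filtered colimits, and that these colimits interchange.

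First, I would unwind the pullback $p^*$ in terms of essentially smooth schemes. For $Y \in \sm K$, one writes $Y = \lim_\alpha Y_\alpha$ as a cofiltered limit of smooth $k$-schemes with affine transition maps; then by the standard extension of a presheaf to essentially smooth schemes, $p^* G(Y) = \colim_\alpha G(Y_\alpha)$ for any presheaf $G$ on $\sm k$. Applying this to $G = \tilde C_1 F$ gives
\[
p^*(\tilde C_1 F)(Y) \;=\; \colim_\alpha\,\tilde C_1 F(Y_\alpha) \;=\; \colim_\alpha\,\colim_{U_\alpha \supset Y_\alpha \times \{0,1\}} F(U_\alpha),
\]
where $U_\alpha$ ranges over open neighborhoods of $Y_\alpha \times \{0,1\}$ in $Y_\alpha \times \A^1$.

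Second, I would analyze $\tilde C_1(p^*F)(Y) = \colim_U p^*F(U)$, where $U$ ranges over open neighborhoods of $Y \times \{0,1\}$ in $Y \times \A^1$. Since $\A^1$ is flat of finite type over $k$, one has $Y \times \A^1 = \lim_\alpha (Y_\alpha \times \A^1)$ in schemes, and by the standard limit/spreading-out theorems (EGA IV, §8) every such $U$ is the pullback of some $U_\beta \subset Y_\beta \times \A^1$, which for $\beta$ sufficiently large also contains $Y_\beta \times \{0,1\}$ (here one uses quasi-compactness of $Y_\beta \times \{0,1\}$). Moreover, $p^* F(U) = \colim_{\gamma \ge \beta} F(U_\gamma)$. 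Reindexing gives
\[
\tilde C_1(p^*F)(Y) \;=\; \colim_{(\beta,\,U_\beta \supset Y_\beta \times \{0,1\})} F(U_\beta),
\]
which is visibly the same filtered colimit as in the previous display, proving $p^*\tilde C_1 F \simeq \tilde C_1 p^*F$. Naturality of the whole construction in $Y$ is automatic from the naturality of the spreading-out.

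For the \emph{In particular} part, I would apply $p^*$ to the contracting map $s\colon F \to \tilde C_1 F$ to obtain $p^*s\colon p^*F \to p^*\tilde C_1 F \simeq \tilde C_1 p^*F$; since $p^*$ commutes with the two restriction maps $i_0^*, i_1^*$ by functoriality, the relations $i_0^* s = 0$ and $i_1^* s = \mathrm{id}_F$ are preserved.

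The main (and essentially only) obstacle is the spreading-out step: one must check carefully that open neighborhoods of $Y \times \{0,1\}$ in $Y \times \A^1$ correspond bijectively (after passing to the cofinal system) to open neighborhoods of $Y_\alpha \times \{0,1\}$ in $Y_\alpha \times \A^1$ at some finite level $\alpha$. This is a routine application of the limit theorems for schemes, but it is the only step where real content enters; everything else is a formal manipulation of filtered colimits.
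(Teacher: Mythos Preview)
Your proposal is correct and follows essentially the same approach as the paper: present $Y \in \sm K$ as a cofiltered limit of smooth $k$-schemes (using that $k$ is perfect, hence $K/k$ is essentially smooth), describe $p^*$ as a filtered colimit, and invoke EGA~IV \S 8 spreading-out to descend open neighborhoods of $Y \times \{0,1\}$ to a finite level. The paper's proof is terser and omits the ``in particular'' verification you spell out, but the content is identical.
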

\begin{proof}
Since $k$ is perfect, $p$ is essentially smooth and so for $X \in Sm_K$ there exists a cofiltered diagram with affine transition maps $\{X_i\} \in Sm_k$ with $X = \lim_i X_i$. Then for any sheaf $G$ on $Sm_k$ we have $(p^*G)(X) = \colim_i G(X_i)$. Now, note that $X\times \A^1= \lim_i (X_i\times_k \A^1)$ and \cite[Corollaire 8.2.11]{EGAIV3} shows that any open subset in $X \times \A^1$ containing $X \times \{0,1\}$ is pulled back from an open subset of $X_i \times \A^1$ containing $X_i \times \{0, 1\}$ for some $i$. The result follows.
\end{proof}

The main property of rationally contractible presheaves is the following result which we will use later. 

\begin{prop}[Suslin] \label{prop:suslin}
Let $F$ be a rationally contractible presheaf of abelian groups on $Sm_k$. Then
$(\Cstar F)(\hat\Delta^\bullet_K) \simeq 0$, for any field $K/k$.
\end{prop}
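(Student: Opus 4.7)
The argument follows Suslin's strategy in \cite[\S 2]{Suslin03}, building an explicit chain null-homotopy on $(\Cstar F)(\hat\Delta^\bullet_K)$ from the rational contraction $s: F \to \tilde C_1 F$.

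The first step is a reduction to $K = k$ via Lemma \ref{lemm:rat-contractible-pullback}: the pullback $p^*F$ is rationally contractible, and $\hat\Delta^\bullet_K$ is the essentially smooth base change of $\hat\Delta^\bullet_k$, so $(\Cstar F)(\hat\Delta^\bullet_K) \simeq (\Cstar p^*F)(\hat\Delta^\bullet_K)$ and we may assume $K = k$. One then recognizes $(\Cstar F)(\hat\Delta^\bullet_k)$ as the total complex of the bisimplicial abelian group
\[
B_{m,n} := F(\hat\Delta^m_k \times \Delta^n_k),
\]
and it suffices to contract either its rows or (equivalently, via Eilenberg--Zilber) its diagonal.

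The main construction uses the combinatorial maps $\phi_i : \Delta^{n+1}_k \to \Delta^n_k \times \A^1_k$ ($i = 0, \ldots, n$) coming from the prism decomposition of $\Delta^n \times \Delta^1$ as a union of $(n+1)$ copies of $\Delta^{n+1}$. Given $f \in F(\hat\Delta^m_k \times \Delta^n_k)$, the class $s(f) \in \tilde C_1 F(\hat\Delta^m_k \times \Delta^n_k) = \colim_U F(U)$ is represented by a section on some open $U \subset \hat\Delta^m_k \times \Delta^n_k \times \A^1$ containing $\hat\Delta^m_k \times \Delta^n_k \times \{0,1\}$. Pulling back along $\id \times \phi_i$ produces a section of $F$ on the open $(\id \times \phi_i)^{-1}(U) \subset \hat\Delta^m_k \times \Delta^{n+1}_k$, which in particular contains $\hat\Delta^m_k \times \{v\}$ for every vertex $v$ of $\Delta^{n+1}_k$ (since $\phi_i$ sends vertices to $\Delta^n \times \{0,1\}$). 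The semi-local hypothesis on $\hat\Delta^m_k$ --- that it has only finitely many closed points, which are $k$-rational --- is then used to choose $U$ small enough (equivalently, a refined representative of $s(f)$ in the colimit) so that the pullback is defined on all of $\hat\Delta^m_k \times \Delta^{n+1}_k$, yielding operators $h_i : B_{m,n} \to B_{m,n+1}$. Assembling the $h_i$ with appropriate signs and verifying the simplicial homotopy identities, with endpoint contributions controlled by $i_0^* s = 0$ and $i_1^* s = \id_F$, then produces the desired chain null-homotopy.

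The main obstacle is the geometric refinement step: one must argue that for a semi-local $X = \hat\Delta^m_k$ and the given combinatorial $\phi_i$, the closed complement of $(\id \times \phi_i)^{-1}(U)$ projects to a constructible subset of $X$ missing every closed point, and hence can be excised entirely after shrinking $U$ within the filtered colimit defining $\tilde C_1 F$. This is where semi-locality is essential and is the precise reason the statement concerns $\hat\Delta^\bullet$ and not $\Delta^\bullet$; the combinatorial verification of simplicial identities is then routine.
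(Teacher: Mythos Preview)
There is a genuine gap in the geometric step you flag as the ``main obstacle''. You contract in the $n$-direction via $\id_{\hat\Delta^m}\times\phi_i$ and obtain a section on an open $W\subset\hat\Delta^m\times\Delta^{n+1}$ containing $\hat\Delta^m\times\{v\}$ for each vertex $v$ of $\Delta^{n+1}$. To land in $B_{m,n+1}=F(\hat\Delta^m\times\Delta^{n+1})$ you need the section on $V'\times\Delta^{n+1}$ for some open $V'\subset\Delta^m$ containing the vertices. Your claim that the complement $Z=(\hat\Delta^m\times\Delta^{n+1})\setminus W$ projects to a subset of $\hat\Delta^m$ missing every closed point is unjustified and in general false: knowing $Z\cap(\hat\Delta^m\times\{v\})=\emptyset$ for vertices $v$ of $\Delta^{n+1}$ says nothing about the fibres $\{x\}\times\Delta^{n+1}$ over closed points $x\in\hat\Delta^m$, and these fibres can certainly meet $Z$. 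Shrinking $U$ only enlarges $Z$, so that does not help either. Semi-locality of $\hat\Delta^m$ is simply irrelevant to whether an open of $\hat\Delta^m\times\Delta^{n+1}$ containing $\hat\Delta^m\times\{\text{finitely many points}\}$ is all of $\hat\Delta^m\times\Delta^{n+1}$.

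Suslin's argument (Lemma 2.4 of \cite{Suslin03}) avoids this by working not with $\Cstar F$ but with the complex of presheaves $\hat C_*F$, where $\hat C_nF(X)=\colim_V F(V)$ with $V$ ranging over opens of $X\times\Delta^n$ containing $X\times\{\text{vertices}\}$. With this relaxed target the prism pullback lands where it should automatically: one only needs $(\id_X\times\phi_i)^{-1}(U)\supset X\times\{\text{vertices of }\Delta^{n+1}\}$, which is precisely the inclusion you verified. This yields an explicit contracting homotopy of $\hat C_*F$ with no hypothesis on $X$. The semi-local cosimplicial scheme $\hat\Delta^\bullet_K$ enters only afterwards, in Proposition 2.5, which supplies the passage from the acyclic $\hat C_*F$ to the bicomplex $(\Cstar F)(\hat\Delta^\bullet_K)$; note in particular that $\hat C_nF(\spec K)=F(\hat\Delta^n_K)$, so $\hat\Delta^\bullet$ is what appears when one evaluates $\hat C_*$ at a point.
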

\begin{proof}
Combine \cite[Lemma 2.4 and Proposition 2.5]{Suslin03}, and use Lemma \ref{lemm:rat-contractible-pullback}.
\end{proof}

Examples of rationally contractible presheaves are given in \cite[Proposition 2.2]{Suslin03}, and we give here a new example that will be very useful in the proof of our main result. 

\begin{prop}\label{prop:ratcontractible}
Let $X$ be a smooth connected scheme over $k$ and $x_0\in X$ be a rational $k$-point of $X$. Assume that there exists an open subscheme $W\subset X\times \A^1$ containing $(X\times \{0,1\})\cup (x_0\times \A^1)$ and a morphism of schemes $f:W\to X$ such that $f_{\vert_{X\times 0}}=x_0$, $f_{\vert_{X\times 1}}=\id_X$ and $f_{\vert_{x_0\times \A^1}}=x_0$. Then the presheaf $\tilde{\mathrm{c}}(X)/\tilde{\mathrm{c}}(x_0)$ is rationally contractible.
\end{prop}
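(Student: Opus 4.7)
The plan is to mimic Suslin's construction for ordinary Voevodsky correspondences, adapted to the MW-setting. Given $\alpha \in \widetilde{\mathrm{Cor}}_k(U,X) = \tilde{\mathrm{c}}(X)(U)$ with (finite) support $T \subset U \times X$, I would build $s(\alpha)$ as an MW-correspondence from an open $V \subset U \times \A^1$ containing $U \times \{0,1\}$ to $X$ by combining the external product $\alpha \boxtimes \id_{\A^1}$ with the morphism $f: W \to X$.

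More precisely, consider the external product $\alpha \boxtimes \id_{\A^1} \in \widetilde{\mathrm{Cor}}_k(U \times \A^1, X \times \A^1)$, whose support, regarded as a closed subset of $U \times X \times \A^1$, is $T \times \A^1$. Since $T \to U$ is finite, the projection $T \times \A^1 \to U \times \A^1$ is finite; hence the image of the closed subset $(T \times \A^1) \setminus (U \times W)$ is closed in $U \times \A^1$, and I define $V$ to be its complement. The hypotheses $X \times \{0,1\} \subset W$ force $U \times \{0,1\} \subset V$, and by construction the restriction $(\alpha \boxtimes \id_{\A^1})|_V$ has support contained in $V \times W$. Using the open immersion $j: W \hookrightarrow X \times \A^1$ (which identifies MW-correspondences with target $X \times \A^1$ supported in $V \times W$ with MW-correspondences with target $W$), I regard this restriction as an element of $\widetilde{\mathrm{Cor}}_k(V,W)$, and define $s(\alpha) \in \widetilde{\mathrm{Cor}}_k(V,X)$ by composing with $\tilde\gamma(f)$. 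Its class in the colimit defines $s(\alpha) \in \tilde C_1 \tilde{\mathrm{c}}(X)(U)$.

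The remaining checks are essentially formal. Naturality in $U$ follows from naturality of external products, open restrictions and composition. To see that $s$ descends to $\tilde{\mathrm{c}}(X)/\tilde{\mathrm{c}}(x_0)$, observe that if $\alpha$ factors through $\tilde{\mathrm{c}}(x_0)$, then $T \subset U \times \{x_0\}$, so $T \times \A^1 \subset U \times (\{x_0\} \times \A^1) \subset U \times W$, whence $V = U \times \A^1$ and $s(\alpha)$ is the composite with $f|_{x_0 \times \A^1} = x_0$, which factors through $\tilde{\mathrm{c}}(x_0)$. For the boundary conditions: pulling back along $i_1: U \cong U \times \{1\} \hookrightarrow V$ and using $f|_{X \times 1} = \id_X$ returns $\alpha$; pulling back along $i_0$ and using $f|_{X \times 0} = x_0$ produces a correspondence factoring through $\tilde{\mathrm{c}}(x_0)$, which is zero in the quotient.

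The main obstacle, compared to Suslin's original argument, is not geometric but bookkeeping: MW-correspondences carry additional data given by a Chow-Witt class (or Milnor-Witt K-theory symbol) on the support, so one has to verify that external product with $\id_{\A^1}$, restriction along the open immersion $V \hookrightarrow U \times \A^1$, the identification of correspondences supported in $V \times W$ with correspondences to $W$, and composition with the graph of a morphism all behave on this extra datum exactly as in the Voevodsky case. All of these are either explicit in \cite{Calmes14b} or routine consequences of the definitions there, but the verification requires some care — and in particular it is this careful handling of the Chow-Witt twists that is the real content beyond Suslin's scheme-level argument.
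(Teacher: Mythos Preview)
Your proposal is correct and follows essentially the same route as the paper: both form the external product $\alpha \times \id_{\A^1}$, cut out the open $V$ (the paper's $U$) as the complement of the projection of the part of the support lying over $(X\times\A^1)\setminus W$, restrict to obtain an MW-correspondence to $W$, compose with $f$, and then verify naturality, descent to the quotient, and the boundary conditions. Your write-up is in fact slightly more explicit than the paper's about why the construction descends (using $f_{\vert x_0\times\A^1}=x_0$) and about the checks $i_0^*s=0$, $i_1^*s=\id$; the paper defers these to ``as in \cite[Proposition 2.2]{Suslin03}''.
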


\begin{proof}
We follow closely Suslin's proof in \cite[Proposition 2.2]{Suslin03}. Let $Y$ be a smooth connected scheme and let $\alpha\in \cor k(Y,X)$. There exists then an admissible subset $Z\subset Y\times X$ (i.e. $Z$ endowed with its reduced structure is finite and surjective over $X$) such that 
\[
\alpha\in \chst {d_X}{Z}{Y\times X}{\omega_X}.
\]
where $\omega_X$ is the pull-back along the projection $Y\times X\to X$ of the canonical sheaf of $X$.
On the other hand, the class of $\tilde\gamma(\id_{\A^1})$ is given by the class of the MW-correspondence $\Delta_*(\langle 1\rangle)$ where 
\[
\Delta_*:\ch 0{\A^1}\to \chst 1{\Delta}{\A^1\times \A^1}{\omega_{\A^1}}
\]
is the push-forward along the diagonal $\Delta:\A^1\to \A^1\times \A^1$, and $\Delta=\Delta(\A^1)$. Considering the Cartesian square
\[
\xymatrix{Y\times X\times \A^1\times \A^1\ar[r]^-{p_2}\ar[d]_-{p_1} & \A^1\times \A^1\ar[d] \\
Y\times X\ar[r] & \spec k}
\]
we may form the exterior product $p_1^*\alpha\cdot p_2^*\Delta_*(\langle 1\rangle)$ and its image under the push-forward along $\sigma:Y\times X\times \A^1\times\A^1\to Y\times\A^1\times X\times \A^1$ represents the MW-correspondence $\alpha\times \id_{\A^1}$ defined in \cite[\S 4.4]{Calmes14b}. Using this explicit description, we find a cycle
\[
\alpha\times \id_{\A^1}\in \chst {d_X+1}{Z\times \Delta}{Y\times \A^1\times X\times \A^1}{\omega_{X\times \A^1}}
\]
where $Z\times \Delta$ is the product of $Z$ and $\Delta$. Now, we may consider the closed subset $T:=(X\times \A^1)\setminus W\subset X\times \A^1$. It is readily verified that $T^\prime:=(Z\times \Delta)\cap (Y\times \A^1\times T)$ is finite over $Y\times \A^1$. Thus $p_{Y\times \A^1}(T^\prime)\subset Y\times \A^1$ is closed and we can consider its open complement $U$ in $(Y\times \A^1)$. It follows from \cite[proof of Proposition 2.2]{Suslin03} that $Y\times \{0,1\}\subset U$. By construction, we see that $\left(U\times (X\times \A^1)\right)\cap (Z\times \Delta)\subset U\times W$ and is finite over $U$. Restricting $\alpha\times \id_{\A^1}$ to $U\times W$, we find 
\[
i^*(\alpha\times \id_{\A^1})\in \chst {d_X+1}{(Z\times \Delta)\cap (U\times W) }{U\times W}{i^*\omega_{X\times \A^1}}
\]
where $i:U\times W\to Y\times \A^1\times X\times \A^1$ is the inclusion. Now, we see that we have a canonical isomorphism $i^*\omega_{X\times \A^1}\simeq \omega_W$ and it follows that we can see $i^*\beta$ as a finite $\mathrm{MW}$-correspondence between $U$ and $W$. Composing with $f:W\to X$, we get a finite MW-correspondence $f\circ s(\alpha):U\to X$, i.e. an element of $\cor k(U,X)=\tilde{\mathrm{c}}(X)(U)$ with $Y\times \{0,1\}\subset U\subset Y\times \A^1$. Using now the canonical morphism $\tilde{\mathrm{c}}(X)(U)\to  \tilde{C}_1(\tilde{\mathrm{c}}(X))(Y)$, we obtain an element denoted by $s(\alpha)$. It is readily checked that this construction is (contravariantly) functorial in $Y$ and thus that we obtain a morphism of presheaves
\[
s:\tilde{\mathrm{c}}(X)\to \tilde{C}_1(\tilde{\mathrm{c}}(X)).
\]
We check as in \cite[Proposition 2.2]{Suslin03} that this morphism induces a morphism
\[
s:\tilde{\mathrm{c}}(X)/\tilde{\mathrm{c}}(x_0)\to \tilde{C}_1(\tilde{\mathrm{c}}(X)/\tilde{\mathrm{c}}(x_0)).
\]
with the prescribed properties.
\end{proof}

\begin{coro}\label{cor:gmcase}
For any $n\geq 1$, the presheaf $\tilde{\mathrm{c}}(\gm^{\times n})/\tilde{\mathrm{c}}(1,\ldots,1)$ is rationally contractible.
\end{coro}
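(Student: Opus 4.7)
The plan is to apply Proposition \ref{prop:ratcontractible} directly with $X = \gm^{\times n}$ and $x_0 = (1, \ldots, 1)$. Note first that $\gm^{\times n}$ is smooth and (geometrically) irreducible, and $(1, \ldots, 1)$ is manifestly a rational $k$-point, so the hypotheses on $X$ and $x_0$ are satisfied. The only real task is to exhibit a suitable open $W \subset \gm^{\times n} \times \A^1$ and a morphism $f: W \to \gm^{\times n}$ satisfying the three boundary conditions.

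I would use the affine-linear interpolation between the constant map at $x_0$ and the identity. Concretely, consider the formula
\[
f(x_1, \ldots, x_n, t) = \bigl(1 + t(x_1 - 1), \ldots, 1 + t(x_n - 1)\bigr),
\]
which is defined on all of $\gm^{\times n} \times \A^1$ as a morphism to $\A^n$, but only lands in $\gm^{\times n}$ where every coordinate is invertible. One then takes $W$ to be the open subset of $\gm^{\times n} \times \A^1$ obtained by removing the closed subsets cut out by $1 + t(x_i - 1) = 0$ for $i = 1, \ldots, n$, and restricts $f$ to $W$.

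Checking the conditions of Proposition \ref{prop:ratcontractible} is then immediate from the formula: at $t = 0$, all coordinates equal $1$, so $\gm^{\times n} \times \{0\} \subset W$ and $f|_{\gm^{\times n} \times 0} = x_0$; at $t = 1$, the $i$-th coordinate is $x_i \in \gm$, so $\gm^{\times n} \times \{1\} \subset W$ and $f|_{\gm^{\times n} \times 1} = \id$; and when every $x_i = 1$, the $i$-th coordinate is identically $1$, so $x_0 \times \A^1 \subset W$ and $f|_{x_0 \times \A^1} = x_0$. Proposition \ref{prop:ratcontractible} then yields the rational contractibility of $\tilde{\mathrm{c}}(\gm^{\times n})/\tilde{\mathrm{c}}(1, \ldots, 1)$.

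There is no serious obstacle here, since all the real work is packaged inside Proposition \ref{prop:ratcontractible}; the only minor point to be careful about is verifying that the chosen $W$ is genuinely open in $\gm^{\times n} \times \A^1$ (clear, as the complement of finitely many principal closed subsets) and contains the prescribed union, which is exactly what the three evaluations above confirm.
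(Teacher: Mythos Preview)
Your proof is correct and essentially identical to the paper's own proof: the paper also applies Proposition \ref{prop:ratcontractible} with the same affine-linear interpolation, writing the $i$-th coordinate as $ut_i + (1-u)$, which is exactly your $1 + t(x_i - 1)$, and takes $W$ to be the locus where all these expressions are nonzero.
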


\begin{proof}
Let $t_1,\ldots,t_n$ be the coordinates of $\gm^{\times n}$ and $u$ be the coordinate of $\A^1$. We consider the open subscheme $W\subset \gm^{\times n}\times \A^1$ defined by $ut_i+(1-u)\neq 0$. It is straightforward to check that $\gm^{\times n}\times \{0,1\}\subset W$ and that $(1,\ldots,1)\times \A^1\subset W$. We then define 
\[
f:W\to \gm^{\times n}
\] 
by $f(t_1,\ldots,t_n,u)=u(t_1,\ldots,t_n)+(1-u)(1,\ldots,1)$ and check that it fulfills the hypothesis of Proposition \ref{prop:ratcontractible}.
\end{proof}

We would like to deduce from this result that Proposition \ref{prop:suslin} also holds for the sheaf $\tilde{\Z}(\gm^{\times n})/\tilde{\Z}(1,\ldots,1)$ associated to the presheaf $\tilde{\mathrm{c}}(\gm^{\times n})/\tilde{\mathrm{c}}(1,\ldots,1)$, or more precisely that it holds for its direct summand $\tilde{\Z}(\gm^{\wedge n}):=\tilde{\Z}\{n\}$ for $n\geq 1$. This requires some comparison results between the Suslin complex of a presheaf and the Suslin complex of its associated sheaf, which are the objects of the next section.


\section{Semi-local schemes}\label{sec:semilocal}

In this section, a \emph{semi-local scheme} will be a localization of a smooth integral scheme $X$ at finitely many points.

Our aim in this section is to extend \cite[Corollary 4.0.4]{Fasel16b} to the case of semi-local schemes. Let us first recall a result of H. Kolderup (\cite[Theorem 3.1]{Kolderup17}).

\begin{thm}\label{thm:excision}
Let $X$ be a smooth $k$-scheme and let $x\in X$ be a closed point. Let $U=\spec {\OO_{X,x}}$ and let $\mathrm{can}:U\to X$ be the canonical inclusion. Let $i:Z\to X$ be a closed subscheme with $x\in Z$ and let $j:X\setminus Z\to X$ be the open complement. Then there exists a finite $\mathrm{MW}$-correspondence $\Phi\in \cor k(U,X\setminus Z)$ such that the following diagram 
\[
\xymatrix{ & X\setminus Z\ar[d]^-j \\
U\ar[r]_{\mathrm{can}}\ar[ru]^-{\Phi} & X}
\]
commutes up to homotopy.
\end{thm}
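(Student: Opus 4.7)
The plan is to adapt the classical excision argument for Voevodsky's finite correspondences to the Chow-Witt setting, using a Gabber--Panin-type geometric presentation lemma.

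First, I would produce a Nisnevich neighborhood $\pi : (V,v) \to (X,x)$ together with an étale map $q : V \to \A^{d-1}_k \times \A^1$ (where $d = \dim_x X$) such that $Z_V := \pi^{-1}(Z)$ is finite over $\A^{d-1}_k$ and $q^{-1}(q(Z_V))$ decomposes as $Z_V \sqcup Z'_V$ with $Z'_V$ disjoint from $v$. Pulling back to $U = \spec{\OO_{X,x}}$, the map $\pi_U : V_U \to X_U$ acquires a canonical section coming from the inclusion $U \hookrightarrow X$, and $q_U : V_U \to \A^{d-1}_U \times \A^1$ is étale with $(Z_V)_U$ finite over $\A^{d-1}_U$. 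This is the MW-analogue of Voevodsky's trick of reducing $X$ near $x$ to an étale cover of affine space in which $Z$ is transverse to the vertical $\A^1$-factor.

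Second, I would construct an $\A^1$-family of admissible cycles $\alpha \in \chst{d_V}{T}{U \times \A^1 \times V}{\omega_V}$, with $T$ finite over $U \times \A^1$, such that $\alpha|_{t=0}$ is the graph of the canonical lift $U \to V$ carrying the class $\langle 1 \rangle$, and $\alpha|_{t=1}$ has support entirely in $U \times (V \setminus Z_V)$. Semi-locality of $U$ together with the finiteness of $(Z_V)_U$ over $\A^{d-1}_U$ allows one to translate the distinguished section linearly along the $\A^1$-factor of $\A^{d-1}_U \times \A^1$ so as to clear the image of $(Z_V)_U$, and the étale transfer of $\langle 1 \rangle$ along $q_U$ provides the Chow-Witt class. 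Composing $\alpha|_{t=1}$ with the Nisnevich projection $V \setminus Z_V \to X \setminus Z$ (which is a morphism in $\cor k$ because $(V, X \setminus Z)$ is a Nisnevich distinguished square over $X$ around $Z$) yields the desired $\Phi$, and $\alpha$ itself witnesses the $\A^1$-homotopy between $j \circ \Phi$ and $\mathrm{can}$.

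The main obstacle lies in the second step: beyond the support and admissibility conditions familiar from the additive setting, one must track the Chow-Witt class---a quadratic form twisted by $\omega_V$---across the $\A^1$-deformation and verify that the étale transfer along $q_U$ and the gluing along the Nisnevich square are compatible with this twist. This is the technical heart of Kolderup's argument and is exactly where the machinery of Chow-Witt groups with supports developed in \cite{Calmes14b} plays its role.
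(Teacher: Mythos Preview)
The paper does not prove this statement: it is quoted verbatim as a result of Kolderup \cite[Theorem 3.1]{Kolderup17}, and the only additional information the paper supplies is the remark (just after the statement) that Kolderup's argument rests on the presentation lemma of Panin--Stavrova--Vavilov \cite[Proposition 1]{Panin09}. So there is no in-paper proof to compare against; your sketch is being measured against Kolderup's actual argument, which the paper only alludes to.

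That said, your outline is on the right track and is consistent with the paper's hint: the first step is exactly the Panin--Stavrova--Vavilov presentation, and you have correctly identified that the quadratic (Chow--Witt) bookkeeping in the deformation step is where the new work lies relative to the Voevodsky case. Two small points of friction in your write-up. First, the justification ``because $(V, X\setminus Z)$ is a Nisnevich distinguished square'' is beside the point: $V\setminus Z_V \to X\setminus Z$ is simply the restriction of the \'etale map $\pi$, hence a morphism of smooth schemes, and its graph already gives a finite MW-correspondence via $\tilde\gamma$. Second, the description ``translate the distinguished section linearly along the $\A^1$-factor so as to clear the image of $(Z_V)_U$'' is an oversimplification of what actually happens: in Kolderup's argument (as in Panin's) one works on a relative curve over $U$ and uses a carefully chosen regular function whose divisor interpolates between the diagonal section and a cycle avoiding $Z$, with the orientation/quadratic data carried by an explicit trivialization of the relevant twisted line bundle. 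Your phrasing suggests a naive affine translation, which would not by itself produce a class in the correct twisted Chow--Witt group; you have flagged this as the ``main obstacle,'' which is accurate, but the sketch as written does not yet indicate how that obstacle is overcome.
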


We note that this result uses a proposition of Panin-Stavrova-Vavilov (\cite[Proposition 1]{Panin09}) which is in fact true for the localization of a smooth scheme at finitely many closed points and that the proof of Theorem \ref{thm:excision} goes through in this setting. This allows us to prove the following corollary. We thank M. Hoyois for pointing out the reduction to closed points used in the proof.

\begin{coro}
Let $X$ be a smooth scheme and let $x_1,\ldots,x_n\in X$ be finitely many points. Let $U=\spec{\OO_{X,{x_1,\ldots,x_n}}}$ and let $\mathrm{can}:U\to X$ be the inclusion. Let $i:Z\to X$ be a closed subscheme containing $x_1,\ldots,x_n$ and let $j:X\setminus Z\to X$ be the open complement. Then, there exists a finite $\mathrm{MW}$-correspondence $\Phi\in \cor k(U,X\setminus Z)$ such that the following diagram 
\[
\xymatrix{ & X\setminus Z\ar[d]^-j \\
U\ar[r]_{\mathrm{can}}\ar[ru]^-{\Phi} & X}
\]
commutes up to homotopy.
\end{coro}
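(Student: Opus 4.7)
The plan is to reduce to the case where the localizing points are closed points of $X$, at which point one applies the extension of Theorem \ref{thm:excision} to semi-localizations at finitely many closed points noted in the paragraph preceding the statement. For the reduction, observe that $X$, being of finite type over the field $k$, is Jacobson, so every non-empty closed subset of $X$ contains a closed point. Hence, for each $i$, I may choose a closed point $y_i \in \overline{\{x_i\}}$ of $X$; since $x_i \in Z$ and $Z$ is closed in $X$, we have $\overline{\{x_i\}} \subseteq Z$, so in fact $y_i \in Z$. Set $U' := \spec{\OO_{X, \{y_1, \ldots, y_n\}}}$.

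Applying the extension of Theorem \ref{thm:excision} to $X$, $U'$ and $Z$ yields a finite $\mathrm{MW}$-correspondence $\Phi' \in \cor k(U', X \setminus Z)$ such that $j \circ \Phi'$ is homotopic to the canonical morphism $\mathrm{can}' : U' \to X$. Since each $y_i$ is a specialization of $x_i$, the prime inclusions $\mathfrak{p}_{x_i} \subseteq \mathfrak{p}_{y_i}$ exhibit $\OO_{X, \{x_1, \ldots, x_n\}}$ as a further localization of $\OO_{X, \{y_1, \ldots, y_n\}}$, which produces a canonical morphism $\rho : U \to U'$ satisfying $\mathrm{can}' \circ \rho = \mathrm{can}$. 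Setting $\Phi := \Phi' \circ \tilde\gamma(\rho) \in \cor k(U, X \setminus Z)$, I compute
\[
j \circ \Phi = (j \circ \Phi') \circ \tilde\gamma(\rho) \simeq \mathrm{can}' \circ \tilde\gamma(\rho) = \mathrm{can},
\]
which is the desired homotopy commutativity.

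The essential step is the invocation of the extended form of Theorem \ref{thm:excision}, which is granted by the semi-local version of the Panin–Stavrova–Vavilov proposition as noted in the text. The remaining ingredients, namely Jacobson-ness of $X$ (which guarantees existence of closed points in each $\overline{\{x_i\}}$) and the extension of finite $\mathrm{MW}$-correspondences to essentially smooth schemes via filtered colimits (which is needed to make sense of $\tilde\gamma(\rho)$ and its composition with $\Phi'$), are entirely standard, so I do not anticipate any further obstacle.
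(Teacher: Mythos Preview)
Your argument is correct and follows essentially the same route as the paper: choose closed specializations $y_i$ of the $x_i$ (the paper calls them $v_i$), apply the semi-local version of Theorem~\ref{thm:excision} at these closed points, and then precompose with the canonical map $U \to U'$ coming from the further localization. Your added justifications (Jacobson-ness for the existence of closed specializations, and the filtered-colimit extension of $\cor k$ to essentially smooth schemes) are appropriate and only make explicit what the paper leaves implicit.
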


\begin{proof}
Let $v_1,\ldots,v_n$ be (not necessarily distinct) closed specializations of $x_1,\ldots,x_n$ and let $V$ be the semi-localization of $X$ at these points. We have a sequence of inclusions $U\stackrel{\iota}\to V\stackrel{\mathrm{can}}\to X$. As $Z$ is closed, we see that $v_1,\ldots,v_n$ are also in $Z$ and we may apply the previous theorem to get a finite MW-correspondence $\Phi^\prime$ and a homotopy commutative diagram
\[
\xymatrix{ & X\setminus Z\ar[d]^-j \\
V\ar[r]_{\mathrm{can}}\ar[ru]^-{\Phi^\prime} & X.}
\]
Composing with the map $U\stackrel{\iota}\to V$, we get the result with $\Phi=\Phi^\prime\circ \iota$.
\end{proof}

We deduce the next result from the above, following \cite[Corollary 11.2]{Kolderup17}.

\begin{coro}\label{cor:restriction}
Let $F$ be a homotopy invariant presheaf with $\mathrm{MW}$-transfers. Let $Y$ be a semi-local scheme. Then the restriction homomorphism $F(Y)\to F(k(Y))$ is injective.
\end{coro}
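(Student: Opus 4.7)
The plan is to adapt the standard argument for homotopy invariant presheaves with transfers (cf.\ Mazza--Voevodsky--Weibel, Lecture 22, or \cite[Corollary 11.2]{Kolderup17} in the MW setting) to the semi-local case, using the preceding corollary as the geometric excision input. Write $Y = \spec{\OO_{X,\{x_1,\ldots,x_n\}}}$ with $X$ smooth; passing to a connected component, we may assume $X$ is integral with generic point $\eta$. Since adjoining or deleting $\eta$ from the localizing set does not alter the semi-local ring, we may further assume $\eta \notin \{x_1,\ldots,x_n\}$. Extending $F$ to essentially smooth schemes by filtered colimits gives $F(Y) = \colim_V F(V)$, where $V \subseteq X$ ranges over opens containing every $x_i$, and $F(k(Y)) = \colim_W F(W)$, where $W \subseteq X$ ranges over nonempty opens.

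Given $\alpha \in F(Y)$ mapping to zero in $F(k(Y))$, I pick a representative $\tilde\alpha \in F(V)$ and, moving further along the system if necessary, arrange that $\tilde\alpha|_{W_0} = 0$ for some nonempty open $W_0 \subseteq V$. I then set $W := W_0 \setminus \bigcup_i \overline{\{x_i\}}$, with closures taken in $V$. Since $\eta \in W_0$ and $\eta \notin \overline{\{x_i\}}$ for any $i$, $W$ is still a nonempty open of $V$; moreover $\tilde\alpha|_W = 0$, and the complement $Z := V \setminus W$ is a closed subscheme of $V$ containing every $x_i$. This is exactly the configuration to which the preceding corollary applies.

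Applying that corollary produces a finite MW-correspondence $\Phi \in \cor k(Y, W)$ with $j \circ \Phi$ homotopic to $\mathrm{can}$, where $j: W \hookrightarrow V$ and $\mathrm{can}: Y \to V$ is the canonical inclusion. Homotopy invariance of $F$ then forces $F(\mathrm{can}) = F(\Phi) \circ F(j)$ as maps $F(V) \to F(Y)$, whence
\[
\alpha = F(\mathrm{can})(\tilde\alpha) = F(\Phi)\bigl(\tilde\alpha|_W\bigr) = F(\Phi)(0) = 0,
\]
which finishes the argument. The only real delicacy is the shrinking in the middle step: to invoke the excision-type corollary we must push the vanishing locus of $\tilde\alpha$ away from each localizing point, which is precisely why the harmless reduction to $\eta \notin \{x_i\}$ is needed.
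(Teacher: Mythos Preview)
Your argument is correct and follows essentially the same route as the paper's proof: represent the class on an open $V$, shrink the vanishing locus to an open $W$ avoiding the localizing points, and apply the preceding excision corollary together with homotopy invariance. Your handling of the shrinking step is in fact slightly more careful than the paper's---you explicitly remove the closures $\overline{\{x_i\}}$ and justify nonemptiness via the harmless reduction $\eta\notin\{x_i\}$, whereas the paper just says ``shrinking $W$ if necessary''.
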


\begin{proof}
Let $Y$ be the semi-localization of the smooth integral $k$-scheme $X$ at the points $x_1, \dots, x_n$.  By definition, we have $F(Y)=\colim_{x_1,\ldots,x_n\in V}F(V)$, whereas $F(k(Y)) = F(k(X))=\colim_{W\neq \emptyset}F(W)$. Here $V,W$ are open subschemes of $X$. Let then $s\in \colim_{x_1,\ldots,x_n\in V}F(V)$ mapping to $0$ in $F(k(X))$. There exists $V$ containing $x_1\ldots,x_n$ and $t\in F(V)$ such that $s$ is the image of $t$ under the canonical homomorphism, and there exists $W\neq \emptyset$ such that $t_{\vert_{W\cap V}}=0$. Shrinking $W$ if necessary, we may assume that $x_1,\ldots,x_n\not\in W$. We can now use Theorem \ref{thm:excision} with $X=V$, $Y=U$ and $Z=V\setminus (V\cap W)$. Since $F$ is homotopy invariant, we then find a commutative diagram
\[
\xymatrix{ & F(V\cap W)\ar[ld]_-{\Phi^*} \\
F(Y) & \ar[l]^-{\mathrm{can}^*}F(V)\ar[u]_-{j^*}}
\]
showing that $s=0$.
\end{proof}

\begin{coro}
Let $F\to G$ be a morphism of homotopy invariant MW-presheaves such that for any finitely generated field extension $L/k$ the induced morphism $F(L)\to G(L)$ is an isomorphism. Then the homomorphism $F(X)\to G(X)$ is an isomorphism for any semi-local scheme $X$.
\end{coro}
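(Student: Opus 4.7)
The plan is to reduce the statement to Corollary \ref{cor:restriction} by passing to the kernel and cokernel of $F\to G$ computed object-wise. Set $K:=\ker(F\to G)$ and $C:=\mathrm{coker}(F\to G)$ as presheaves on $\cor k$. Both inherit the structure of a presheaf with MW-transfers from functoriality of kernels and cokernels in the category of abelian-group-valued presheaves on an additive category. Homotopy invariance is preserved as well: for any smooth $Y$ the projection $p\colon Y\times\A^1\to Y$ induces isomorphisms $F(Y)\to F(Y\times\A^1)$ and $G(Y)\to G(Y\times\A^1)$, and hence isomorphisms on object-wise kernels and cokernels. So $K$ and $C$ are homotopy invariant MW-presheaves.

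By hypothesis, for every finitely generated field extension $L/k$ the map $F(L)\to G(L)$ is an isomorphism, which is exactly the statement that $K(L)=0$ and $C(L)=0$. Now let $X$ be a semi-local scheme, i.e.\ the localization of a smooth integral $k$-scheme $Y$ at finitely many points. Then the function field $k(X)=k(Y)$ is a finitely generated field extension of $k$, so $K(k(X))=0=C(k(X))$. Applying Corollary \ref{cor:restriction} to the homotopy invariant MW-presheaves $K$ and $C$ and to the semi-local scheme $X$, we conclude that the restriction maps $K(X)\hookrightarrow K(k(X))$ and $C(X)\hookrightarrow C(k(X))$ are injective, whence $K(X)=0$ and $C(X)=0$. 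Equivalently, $F(X)\to G(X)$ is both injective and surjective.

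There is no real obstacle here: the heavy lifting has already been done in Corollary \ref{cor:restriction}, which supplies generic injectivity for homotopy invariant MW-presheaves on semi-local schemes. The only points that need explicit mention are that both the kernel and the cokernel of a morphism of homotopy invariant MW-presheaves remain homotopy invariant MW-presheaves, and that the residue field of a semi-local scheme arising from a smooth finite-type $k$-scheme is finitely generated over $k$, so that the hypothesis can be applied to it.
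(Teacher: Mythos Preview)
Your proof is correct and follows essentially the same approach as the paper: form the kernel and cokernel of $F\to G$ in the abelian category of MW-presheaves, observe they are homotopy invariant with vanishing values on finitely generated field extensions, and then invoke Corollary \ref{cor:restriction} to conclude they vanish on semi-local schemes. You have simply spelled out a few details (why $K$ and $C$ are homotopy invariant, why $k(X)$ is finitely generated) that the paper leaves implicit.
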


\begin{proof}
As the category of MW-presheaves is abelian, we can consider both the kernel $K$ and the cokernel $C$ of $F\to G$. An easy diagram chase shows that $C$ and $K$ are homotopy invariant and our assumption implies that $C(L)=0=K(L)$ for any finitely generated field extension $L/k$. By Corollary \ref{cor:restriction}, it follows that $C(X)=0=K(X)$, proving the claim.
\end{proof}

\begin{coro}\label{cor:equality}
Let $F$ be a homotopy invariant MW-presheaf. Let respectively $F_{\Zar}$ be the associated Zariski sheaf and $F_{\Nis}$ be the associated Nisnevich sheaf. Then the canonical sequence of morphisms of presheaves
\[
F\to F_{\Zar} \to F_{\Nis}
\]
induces isomorphisms $F(X)\simeq F_{\Zar}(X)\simeq F_{\Nis}(X)$ for any semi-local scheme $X$.
\end{coro}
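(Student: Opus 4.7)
The plan is to apply the preceding corollary in two steps, to the canonical morphisms $F \to F_{\Zar}$ and $F_{\Zar} \to F_{\Nis}$. For this, I need to check two things: that $F_{\Zar}$ and $F_{\Nis}$ are themselves homotopy invariant MW-presheaves (so that the hypotheses of the preceding corollary are satisfied), and that each of the two comparison maps induces an isomorphism on sections over every finitely generated field extension $L/k$. Granted both points, two successive applications of the preceding corollary yield the desired chain of isomorphisms $F(X) \simeq F_{\Zar}(X) \simeq F_{\Nis}(X)$ for any semi-local $X$.

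The comparison on fields is the easy point. If $L/k$ is a finitely generated field extension, then $\spec L$ is a Zariski point, so every Zariski (a fortiori every Nisnevich) cover of $\spec L$ admits a section. Writing $\spec L$ as a cofiltered limit of smooth models $\{U_i\}$ with affine transition maps, and using that both sheafifications commute with cofiltered limits of such affine pro-systems (equivalently, with stalks), the maps $F(L) \to F_{\Zar}(L) \to F_{\Nis}(L)$ are identifications of the relevant stalks and are therefore isomorphisms.

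The main obstacle is the verification that $F_{\Zar}$ and $F_{\Nis}$ are again homotopy invariant MW-presheaves. The existence of an MW-transfer structure on $F_{\Nis}$ was already recalled in Section~\ref{sec:recollections} via \cite[Proposition~1.2.11]{Deglise16}, and a completely analogous argument (replacing the Nisnevich topology with the Zariski topology, which is coarser) gives the MW-transfer structure on $F_{\Zar}$. The real content is the preservation of homotopy invariance under sheafification: this is the MW-transfers analogue of Voevodsky's classical result for presheaves with transfers, and has been established in the MW setting in \cite{Calmes14b, Deglise16}. Once this is granted, the preceding corollary applies to both $F \to F_{\Zar}$ and $F_{\Zar} \to F_{\Nis}$, and concatenating the two resulting isomorphisms over a semi-local scheme $X$ completes the proof.
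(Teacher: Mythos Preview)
Your overall strategy---reduce to the preceding corollary by checking that the sheafifications are homotopy invariant MW-presheaves and that the maps are isomorphisms on fields---is sound, and the field-level argument is fine. However, your justification that $F_{\Zar}$ carries MW-transfers ``by a completely analogous argument'' to the Nisnevich case is not correct: the proof that Nisnevich sheafification preserves (MW-)transfers genuinely uses features of the Nisnevich topology (exactness of \v{C}ech complexes of covers in the category of correspondences), and there is no reason for the naive analogue to go through for the Zariski topology.

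The paper sidesteps this entirely by invoking \cite[Theorem~3.2.9]{Deglise16}, which gives $F_{\Zar}=F_{\Nis}$ for homotopy invariant $F$ (the MW-analogue of Voevodsky's comparison). This single fact simultaneously equips $F_{\Zar}$ with MW-transfers, establishes its homotopy invariance, and collapses your two applications of the preceding corollary into one. So the fix is easy: replace your ``analogous argument'' for $F_{\Zar}$ with the identification $F_{\Zar}=F_{\Nis}$, after which your proof and the paper's coincide.
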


\begin{proof}
First note that $F_{\Nis}$ is indeed an MW-sheaf by \cite[Proposition 1.2.11]{Deglise16}. Moreover, the associated Zariski sheaf $F_{\Zar}$ coincides with $F_{\Nis}$ and they are both homotopy invariant by \cite[Theorem 3.2.9]{Deglise16}. To conclude, we observe that the sequence $F\to F_{\Zar} \to F_{\Nis}$ induces isomorphisms when evaluated at finitely generated field extensions and we can use the previous corollary to obtain the result.
\end{proof}

We now pass to the identification of the higher cohomology presheaves of the sheaf associated to a homotopy invariant MW-presheaf $F$.

\begin{lem}\label{lem:superior}
Let $F$ be a homotopy invariant MW-presheaf. Then $\H^n_{\Zar}(X,F_{\Zar})=\H^n_{\Nis}(X,F_{\Nis})=0$ for any semi-local scheme $X$ and any $n>0$.
\end{lem}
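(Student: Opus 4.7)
The plan is to adapt Voevodsky's strategy for homotopy invariant sheaves with ordinary transfers: build a Cousin/Gersten resolution of $F_{\Nis}$ whose terms are Nisnevich-acyclic on all smooth schemes, and then show that, for the semi-local $X$, the resulting complex of global sections is itself exact in positive degrees.

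First I would reduce the two vanishings to a single one. By \cite[Theorem 3.2.9]{Deglise16}, $F_{\Zar}$ and $F_{\Nis}$ agree as sheaves on $\sm k$, and (as in the ordinary transfer setting) the change-of-topology map induces isomorphisms $\H^n_{\Zar}(X, F_{\Zar}) \simeq \H^n_{\Nis}(X, F_{\Nis})$ for smooth $X$; this extends to essentially smooth and hence semi-local $X$ by a standard colimit-of-schemes argument in the spirit of Lemma \ref{lemm:rat-contractible-pullback}. So it is enough to prove the Nisnevich vanishing.

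Next, write $X = \spec{\OO_{Y, x_1, \ldots, x_n}}$ with $Y$ a smooth $k$-scheme, and consider the Cousin complex of Nisnevich sheaves on $Y$ associated to the filtration of $Y$ by codimension of support. The Gersten conjecture for homotopy invariant MW-sheaves on local rings—established in \cite[Section 3]{Deglise16}—ensures that this Cousin complex is a genuine resolution of $F_{\Nis}$ (exactness may be checked stalkwise). Moreover each of its terms is a direct sum of pushforwards from points of $Y$ of Nisnevich-locally-constant sheaves, hence is Nisnevich-acyclic on any smooth base. Consequently $\H^n_{\Nis}(X, F_{\Nis})$ coincides with the cohomology of the complex of global sections over $X$, i.e.\ of the Gersten complex of the semi-local ring $\OO_{Y, x_1, \ldots, x_n}$.

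The final and only substantial step is to check that this Gersten complex for the semi-local ring is exact in positive degrees; this is the semi-local Gersten conjecture for MW-sheaves, and constitutes the main obstacle. However, the semi-local form of Kolderup's excision theorem established earlier in this section (the corollary immediately following Theorem \ref{thm:excision}) supplies precisely the input needed to adapt the standard Colliot-Th\'el\`ene--Hoobler--Kahn / Panin argument from the local to the semi-local setting, thereby promoting the local Gersten conjecture of \cite{Deglise16} to its semi-local counterpart and completing the proof.
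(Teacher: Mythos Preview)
Your approach via the Cousin/Gersten resolution is plausible and could in principle be made to work, but it is considerably heavier than the paper's argument and leaves the most delicate step---the semi-local Gersten exactness---only sketched. The paper's proof is essentially three lines and bypasses Gersten machinery entirely: one observes that the presheaf $U \mapsto \H^n_{\Nis}(U, F_{\Nis})$ is itself a homotopy invariant MW-presheaf (transfers come from having enough injectives in the abelian category of MW-sheaves, homotopy invariance from \cite[Theorem 3.2.9]{Deglise16}), notes that this presheaf vanishes on fields because fields have Nisnevich cohomological dimension $0$, and then applies Corollary \ref{cor:restriction} directly to this cohomology presheaf to conclude vanishing on any semi-local $X$. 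In other words, the key move is to feed the \emph{cohomology} presheaf, rather than $F$ itself, into the injectivity-at-the-generic-point result just established. Your route would yield the stronger statement of semi-local Gersten exactness as a byproduct, which may be of independent interest, but for the lemma as stated the paper's argument is both shorter and self-contained within the section, whereas yours imports the full local Gersten resolution from \cite{Deglise16} and then requires a nontrivial upgrade of it that you have not carried out in detail.
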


\begin{proof}
Using \cite[Theorem 3.2.9]{Deglise16}, it suffices to prove the result for $F_{\Nis}$. Now, the presheaf $U\mapsto \H^n_{\Nis}(U,F_{\Nis})$ is an MW-presheaf (as the category of MW-sheaves has enough injectives by \cite[Proposition 1.2.11]{Deglise16} and \cite[Th\'eor\`eme 1.10.1]{Grothendieck57}) which is homotopy invariant by \cite[Theorem 3.2.9]{Deglise16} again. As any field has Nisnevich cohomological dimension $0$, we find $\H^n_{\Nis}(L,F_{\Nis})=0$ for any finitely generated field extension $L/k$. We conclude using Corollary \ref{cor:restriction}.
\end{proof}

Recall that $\DMteZ$ is the homotopy category of a certain model category. This model category is obtained as a localization of a model structure on the category $C(\sh{k}{\Nis})$ of unbounded chain complexes of MW-sheaves. We call a fibrant replacement functor for this localized model structure the \emph{$\mathrm{MW}_{\mot}$-localization} functor, and denote it $\mathrm{L}_{\mot}$. If $K$ is a complex of MW-presheaves, then we can take the associated complex of Nisnevich MW-sheaves $a_\Nis K$. We write $\mathrm{L}_\Nis K$ for a fibrant replacement of $a_\Nis K$ in the usual (i.e. non-$\A^1$-localized) model structure on $C(\sh{k}{\Nis})$ (\cite[\S 3.1]{Deglise16}).

We will need the following slight strengthening of \cite[Corollary 3.2.14]{Deglise16}.
\begin{lem} \label{lem:motivic-localization}
Let $F$ be an MW-presheaf. Then the motivic localization (of $F_{Nis}$) is given by $\mathrm{L}_{\Nis} \Cstar F$.
\end{lem}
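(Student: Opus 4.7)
The plan is to reduce the statement to the sheaf case, which is exactly \cite[Corollary 3.2.14]{Deglise16}. Applied directly to the MW-sheaf $F_\Nis$, that result gives
\[
 \mathrm{L}_\mot F_\Nis \;\simeq\; \mathrm{L}_\Nis \Cstar F_\Nis.
\]
Thus it suffices to produce a natural equivalence $\mathrm{L}_\Nis \Cstar F \simeq \mathrm{L}_\Nis \Cstar F_\Nis$ induced by the unit map $F\to F_\Nis$. Since $\mathrm{L}_\Nis$ is fibrant replacement in the Nisnevich model structure on $C(\sh{k}{\Nis})$ and therefore preserves (in fact detects) quasi-isomorphisms of complexes of MW-sheaves, everything comes down to showing that the induced map $a_\Nis\Cstar F \to a_\Nis\Cstar F_\Nis$ is a quasi-isomorphism of complexes of Nisnevich MW-sheaves.

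The key observation is that the $n$-th term of $\Cstar G$ is the MW-presheaf $G(\Delta^n\times -)$, i.e.\ the pullback of $G$ along the endofunctor $X\mapsto \Delta^n\times X$ of $\sm k$ (and of $\cor k$). Because Nisnevich covers are stable under base change, this endofunctor is continuous for the Nisnevich topology, so pullback along it commutes with Nisnevich sheafification. I would thus conclude, naturally in $n$, that
\[
 a_\Nis(\Cstar F)_n \;=\; a_\Nis\bigl(F(\Delta^n\times -)\bigr) \;\simeq\; (a_\Nis F)(\Delta^n\times -) \;=\; (\Cstar F_\Nis)_n,
\]
whence $a_\Nis\Cstar F \simeq \Cstar F_\Nis$ as complexes of MW-sheaves, not merely as objects of the derived category. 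Passing to $\mathrm{L}_\Nis$ and combining with the citation above gives the lemma.

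The only genuinely non-formal point is the commutation of $a_\Nis$ with the pullback along $\Delta^n\times -$ at the level of MW-\emph{sheaves}: one must know that Nisnevich sheafification on MW-presheaves is compatible with the ordinary Nisnevich sheafification on underlying presheaves of abelian groups, which is the content of \cite[Proposition 1.2.11]{Deglise16}. Granting this, the continuity argument for $X\mapsto \Delta^n\times X$ is the same as in the $\cor k$-free setting, since the MW-transfer structure on $G(\Delta^n\times -)$ is transported termwise. I expect this to be the main (mild) obstacle; the remaining steps are formal manipulations of fibrant replacement functors and of the unit of the sheafification adjunction.
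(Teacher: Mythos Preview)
There is a genuine gap in the step where you assert that ``pullback along $X\mapsto\Delta^n\times X$ commutes with Nisnevich sheafification''. Continuity of this endofunctor (stability of Nisnevich covers under products with $\Delta^n$) only guarantees that precomposition sends sheaves to sheaves; it does \emph{not} give $a_\Nis\bigl(F(\Delta^n\times -)\bigr)\simeq F_\Nis(\Delta^n\times -)$. For that conclusion you would need the functor to be cocontinuous as well, and it is not: an arbitrary Nisnevich cover of $\Delta^n\times X$ need not be refined by one of the form $\{\Delta^n\times U_i\to\Delta^n\times X\}$. Concretely, for a henselian local $V$ the scheme $\Delta^n\times V$ (with $n\ge 1$) is no longer local, so there is no reason for $F(\Delta^n\times V)\to F_\Nis(\Delta^n\times V)$ to be an isomorphism. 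Hence your claimed degreewise isomorphism $a_\Nis\Cstar F\simeq\Cstar F_\Nis$ fails in general, and the reduction to \cite[Corollary 3.2.14]{Deglise16} does not go through. Even weakening to a quasi-isomorphism does not obviously help: if $K$ denotes the cokernel of $F\to F_\Nis$, you would need $K(\Delta^\bullet\times V)$ to be acyclic for every henselian local $V$, and $a_\Nis K=0$ gives no control over $K$ on the non-local schemes $\Delta^n\times V$.

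The paper sidesteps any comparison between $\Cstar F$ and $\Cstar F_\Nis$. Instead it argues directly that $a_\Nis\Cstar F$ is the $\A^1$-localization of $F_\Nis$: the map $\CstarS F\to\Cstar F$ induced by the projection $\Delta^\bullet\to *$ is a levelwise $\A^1$-homotopy equivalence (each $\Delta^n$ being $\A^1$-contractible), so after sheafification $F_\Nis\simeq a_\Nis\CstarS F$ and $a_\Nis\Cstar F$ become $\A^1$-equivalent; then $a_\Nis\Cstar F$ is $\A^1$-local by \cite[Corollary 3.2.11]{Deglise16}. This is precisely the ``slight strengthening'' of \cite[Corollary 3.2.14]{Deglise16} the paper advertises, and the strengthening is genuine: one cannot simply bootstrap from the sheaf case.
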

\begin{proof}
Throughout the proof we abbreviate $\Delta^\bullet := \Delta^\bullet_k$.
We claim that $F_\Nis$ and $a_\Nis \Cstar F$ are $\A^1$-equivalent. To see this, let $\CstarS F$ denote the complex constructed like $\Cstar F$, but with the constant cosimplicial object $*$ in place of $\Delta^\bullet$. In other words $\CstarS F = F \xleftarrow{0} F \xleftarrow{1} F \xleftarrow{0} \dots$. The projection $\Delta^\bullet \to *$ induces $\alpha: \CstarS F \to \Cstar F$. Since $\CstarS F$ is chain homotopy equivalent to $F$, it will suffice to show that $a_\Nis \alpha: a_\Nis \CstarS F \to a_\Nis \Cstar F$ is an $\A^1$-equivalence. For this, it is enough to prove that $a_\Nis \alpha$ is a levelwise $\A^1$-equivalence (because $\A^1$-equivalences are closed under filtered colimits), for which in turn it is enough to prove that $\alpha$ is a levelwise $\A^1$-homotopy equivalence. This is clear, since $\alpha_n$ is $F \to F^{\Delta^n}$, and $\Delta^n$ is $\A^1$-contractible. This proves the claim.
It thus remains to show that $a_\Nis \Cstar F$ is $\A^1$-local. This follows from \cite[Corollary 3.2.11]{Deglise16}.
\end{proof}

\begin{coro} \label{coro:compute-semilocal-suslin}
Let $F$ be a MW-presheaf and let $\Cstar(F)$ be its associated Suslin complex. For any $n\in\Z$, let $\H^n(\Cstar(F))$ be the $n$-th cohomology presheaf of $\Cstar(F)$. Then for any semi-local scheme $X$ over $k$, we have canonical isomorphisms
\[
\H^n(\Cstar(F))(X)\to \mathbb{H}^n_{\Nis}(X,\mathrm{L}_{\mot} F_\Nis).
\]
\end{coro}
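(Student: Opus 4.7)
The plan is to combine Lemma \ref{lem:motivic-localization} with the hypercohomology spectral sequence and then feed in the two main results of this section: the vanishing in Lemma \ref{lem:superior} and the sheafification isomorphism of Corollary \ref{cor:equality}. The input needed from outside the current section is that the cohomology presheaves of a Suslin complex of an MW-presheaf are themselves homotopy invariant MW-presheaves, which is the MW-analogue of Voevodsky's classical theorem and is recorded in \cite[Theorem 3.2.9]{Deglise16}.

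First I would rewrite the target using Lemma \ref{lem:motivic-localization}: one has
\[
\mathbb{H}^n_\Nis(X, \mathrm{L}_{\mot} F_\Nis) \;\simeq\; \mathbb{H}^n_\Nis(X, \mathrm{L}_\Nis \Cstar F) \;\simeq\; \mathbb{H}^n_\Nis(X, \Cstar F),
\]
where the right-hand side denotes the Nisnevich hypercohomology of the complex of sheaves $a_\Nis \Cstar F$. The canonical map of the statement can then be taken to be the natural edge morphism $\H^n(\Cstar F)(X) \to \mathbb{H}^n_\Nis(X, \Cstar F)$ from presheaf cohomology to sheaf hypercohomology.

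Next I would exploit the standard hypercohomology spectral sequence
\[
E_2^{p,q} = \H^p_\Nis\bigl(X, (\H^q \Cstar F)_\Nis\bigr) \;\Longrightarrow\; \mathbb{H}^{p+q}_\Nis(X, \Cstar F).
\]
Each presheaf $\H^q \Cstar F$ is a homotopy invariant MW-presheaf by the cited result from \cite{Deglise16}, so Lemma \ref{lem:superior} gives $E_2^{p,q} = 0$ for every $p > 0$ when $X$ is semi-local. The spectral sequence therefore degenerates to an isomorphism
\[
\mathbb{H}^n_\Nis(X, \Cstar F) \;\simeq\; (\H^n \Cstar F)_\Nis(X),
\]
and the right-hand side is identified with $\H^n(\Cstar F)(X)$ by Corollary \ref{cor:equality} applied to the homotopy invariant MW-presheaf $\H^n \Cstar F$. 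Chasing through these identifications shows that their composite inverts the edge map, giving the desired canonical isomorphism.

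The only real obstacle is the homotopy invariance of the cohomology presheaves of $\Cstar F$, which is supplied by \cite[Theorem 3.2.9]{Deglise16}; everything else is routine. Convergence of the spectral sequence poses no difficulty because $\Cstar F$ is concentrated in non-positive cohomological degrees and, by the vanishing above, only the $p=0$ column contributes on a semi-local base.
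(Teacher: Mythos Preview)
Your argument is correct and matches the paper's proof essentially line for line: use Lemma~\ref{lem:motivic-localization} to replace $\mathrm{L}_{\mot}F_\Nis$ by $\mathrm{L}_\Nis\Cstar F$, run the hypercohomology spectral sequence, kill the $p>0$ columns via Lemma~\ref{lem:superior}, and identify the $p=0$ column with presheaf cohomology via Corollary~\ref{cor:equality}. The only quibble is bibliographic: the homotopy invariance of the \emph{presheaves} $\H^q(\Cstar F)$ is an elementary simplicial-homotopy fact rather than the content of \cite[Theorem~3.2.9]{Deglise16}, which is instead what guarantees that the associated Nisnevich sheaves remain homotopy invariant (so that Lemma~\ref{lem:superior} and Corollary~\ref{cor:equality} apply).
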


\begin{proof}
By Lemma \ref{lem:motivic-localization}, we have $\mathrm{L}_{\mot} F_\Nis \simeq \mathrm{L}_\Nis \Cstar F$.
Observe first that the cohomology presheaves $\H^n(\Cstar(F))$ are homotopy invariant and have MW-transfers. Denote by $h^n_{\Nis}$ the associated Nisnevich sheaves (which are homotopy invariant MW-sheaves by \cite[Theorem 3.2.9]{Deglise16}). Considering the hypercohomology spectral sequence, we see that it suffices to prove that $\H^n_{\Nis}(\Cstar(F))(X)=\H^0(X,h^n_{\Nis})$ and that $\H_{\Nis}^i(X,h^n_{\Nis})=0$ for $i>0$. The first claim follows from Corollary \ref{cor:equality}, while the second one follows from Lemma \ref{lem:superior}.
\end{proof}

\begin{rem}
Using the fact that the Zariski sheaf $h^n_{\Zar}$ associated to $\H^n(\Cstar(F))$ coincides with $h^n_{\Nis}$ (\cite[Theorem 3.2.9]{Deglise16}), the same arguments as above give a canonical isomorphism
\[
\H^n(\Cstar(F))(X)\to \mathbb{H}^n_{\Zar}(X,\Cstar(F_{\Zar})).
\]
\end{rem}

Finally, we are in position to prove the result we need. In the statement, the complexes are the total complexes associated to the relevant bicomplexes of abelian groups.

\begin{coro} \label{coro:compute-semilocal-suslin-2}
Let $F$ be a MW-presheaf and $K/k$ be a finitely generated field extension. The canonical map
\[ \Cstar(F)(\hat\Delta^\bullet_K) \to (\mathrm{L}_{\mot}F_{\Nis})(\hat\Delta^\bullet_K) \]
is a weak equivalence of complexes of abelian groups.
\end{coro}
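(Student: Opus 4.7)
The plan is to apply Corollary \ref{coro:compute-semilocal-suslin} at each cosimplicial level of $\hat\Delta^\bullet_K$ and then pass to totalizations via a standard bicomplex argument. The geometric input is that each $\hat\Delta^n_K$ is an essentially smooth semi-local scheme, to which Corollary \ref{coro:compute-semilocal-suslin} applies after a cofiltered-limit argument in the spirit of Lemma \ref{lemm:rat-contractible-pullback}.

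By Lemma \ref{lem:motivic-localization}, $\mathrm{L}_\mot F_\Nis \simeq \mathrm{L}_\Nis \Cstar F$; in particular $\mathrm{L}_\mot F_\Nis$ is Nisnevich-fibrant, so for any essentially smooth scheme $X$ the cohomology of the complex of sections $(\mathrm{L}_\mot F_\Nis)(X)$ computes $\mathbb{H}^\bullet_\Nis(X, \mathrm{L}_\mot F_\Nis)$. Taking $X = \hat\Delta^n_K$ and invoking Corollary \ref{coro:compute-semilocal-suslin}, we obtain canonical isomorphisms
\[
\H^p\bigl(\Cstar(F)(\hat\Delta^n_K)\bigr) \simeq \mathbb{H}^p_\Nis(\hat\Delta^n_K, \mathrm{L}_\mot F_\Nis) \simeq \H^p\bigl((\mathrm{L}_\mot F_\Nis)(\hat\Delta^n_K)\bigr),
\]
so that the canonical map $\Cstar(F)(\hat\Delta^n_K) \to (\mathrm{L}_\mot F_\Nis)(\hat\Delta^n_K)$ is a quasi-isomorphism of complexes of abelian groups for each fixed $n \geq 0$.

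Both sides of the target equivalence are by assumption the total complexes of bicomplexes: in bidegree $(n, m)$, one finds the chain-degree-$m$ piece of $\Cstar F$, respectively $\mathrm{L}_\mot F_\Nis$, evaluated at $\hat\Delta^n_K$. The step above provides a columnwise quasi-isomorphism of these bicomplexes; the spectral sequence associated to the column filtration therefore has isomorphic $E_1$-pages on both sides, whence the induced map of total complexes is a quasi-isomorphism.

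The only subtle point is the strong convergence of this column spectral sequence, since the bicomplexes are not confined to a single quadrant in the cochain-degree convention. I would handle it by truncation: for each $M$, the good truncations $\tau_{\le M}\Cstar F \to \tau_{\le M}\mathrm{L}_\mot F_\Nis$ remain levelwise quasi-isomorphisms after evaluation at $\hat\Delta^\bullet_K$, and they form bicomplexes bounded in the chain direction, so their column spectral sequences converge strongly and the corresponding truncated total complexes are quasi-isomorphic. Passing to the directed colimit $M \to \infty$---which commutes with totalization as a direct sum and preserves quasi-isomorphisms---recovers the original map and yields the claim.
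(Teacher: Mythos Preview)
Your proposal is correct and follows essentially the same approach as the paper: apply Corollary \ref{coro:compute-semilocal-suslin} levelwise and then compare the bicomplex spectral sequences. One small simplification: no cofiltered-limit argument is needed, since if $K = k(U)$ with generic point $\eta$ then $\hat\Delta^n_K$ is literally the semi-localization of the smooth $k$-scheme $\Delta^n \times U$ at the points $(v_i,\eta)$, so Corollary \ref{coro:compute-semilocal-suslin} applies directly; your added care about convergence via truncation is a welcome elaboration of what the paper asserts without proof.
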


\begin{proof}
We have strongly convergent spectral sequences
\[ \H^p(\Cstar(F)(\hat\Delta^q_K)) \Rightarrow \H^{p+q}(\Cstar(F)(\hat\Delta^\bullet_K)) \]
and
\[ 
\H^p((\mathrm{L}_{\mot} F_\Nis)(\hat\Delta^q_K)) \Rightarrow \H^{p+q}((\mathrm{L}_{\mot} F_\Nis)(\hat\Delta^\bullet_K)). 
\]
Since $\mathrm{L}_{\mot} F_\Nis$ is Nisnevich-local, we have $\H^p((\mathrm{L}_{\mot} F_\Nis)(\hat\Delta^q_K)) = \mathbb{H}^p_{\Nis}(\hat\Delta^q_K, \mathrm{L}_{\mot} F_\Nis)$.
Thus the claim follows from Corollary \ref{coro:compute-semilocal-suslin} and spectral sequences comparison. Here we use that $\hat\Delta^q_K$ is semilocal: if $K = k(U)$ for some smooth irreducible scheme with generic point $\eta$, then $\hat\Delta^q_K$ is the semilocalization of $\Delta^q \times U$ in the points  $(v_i, \eta)$.
\end{proof}

\begin{thm}\label{thm:rational}
For any $n\geq 1$ and $K/k$ finitely generated, we have 
\[
\mathrm{L}_{\mot}(\tilde{\Z}(n))(\hat\Delta^\bullet_K)\simeq 0.
\]
\end{thm}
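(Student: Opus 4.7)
The proof essentially amounts to assembling the tools that have been set up in the preceding sections. By definition $\tilde\Z(n) = \tilde\Z\{n\}[-n]$, so a shift allows me to reduce to proving $\mathrm{L}_{\mot}(\tilde{\Z}\{n\})(\hat\Delta^\bullet_K) \simeq 0$. Recall that $\tilde\Z\{n\}$ is the Nisnevich sheaf associated to $\tilde{\mathrm{c}}(\gm^{\wedge n})$ and is realized as a direct summand of the Nisnevich sheafification of $\tilde{\mathrm{c}}(\gm^{\times n})/\tilde{\mathrm{c}}(1,\dots,1)$. Since taking a direct summand commutes with both $\mathrm{L}_{\mot}$ and evaluation on $\hat\Delta^\bullet_K$, it suffices to prove the vanishing for $F_\Nis$, where $F := \tilde{\mathrm{c}}(\gm^{\times n})/\tilde{\mathrm{c}}(1,\dots,1)$.

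The plan is then to apply Corollary \ref{coro:compute-semilocal-suslin-2} to this $F$, which produces a quasi-isomorphism
\[
\Cstar(F)(\hat\Delta^\bullet_K) \simeq (\mathrm{L}_{\mot} F_\Nis)(\hat\Delta^\bullet_K),
\]
thereby replacing the question about the Nisnevich-motivic localization with a question about the Suslin complex of the original presheaf. This is precisely the bridge that Section \ref{sec:semilocal} was set up to provide, and it is the step where the fact that $\hat\Delta^q_K$ is semi-local (noted at the end of the proof of Corollary \ref{coro:compute-semilocal-suslin-2}) is crucial.

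Having made this reduction, I invoke Corollary \ref{cor:gmcase}, which says that $F$ is rationally contractible, and then Proposition \ref{prop:suslin} (Suslin's lemma), which gives $\Cstar(F)(\hat\Delta^\bullet_K) \simeq 0$. Combining this with the quasi-isomorphism above yields $(\mathrm{L}_{\mot} F_\Nis)(\hat\Delta^\bullet_K) \simeq 0$, and hence, by the splitting, $\mathrm{L}_{\mot}(\tilde\Z\{n\})(\hat\Delta^\bullet_K) \simeq 0$, which is what we wanted.

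There is no genuinely hard step left in the argument: all the substantive work has been front-loaded into the construction of the rational contraction (Proposition \ref{prop:ratcontractible} and Corollary \ref{cor:gmcase}) and into the semi-local comparison results of Section \ref{sec:semilocal}. The only point that requires a moment's care is the compatibility of the motivic localization with the passage to the direct summand $\tilde\Z\{n\}$, which is immediate from the additivity of $\mathrm{L}_{\mot}$ and of evaluation on $\hat\Delta^\bullet_K$.
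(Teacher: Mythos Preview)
Your proof is correct and follows exactly the same route as the paper: reduce via the shift and the direct-summand splitting to $F=\tilde{\mathrm{c}}(\gm^{\times n})/\tilde{\mathrm{c}}(1,\dots,1)$, apply Corollary~\ref{coro:compute-semilocal-suslin-2} to pass from $\mathrm{L}_{\mot}F_\Nis$ to $\Cstar(F)$, and conclude by Corollary~\ref{cor:gmcase} and Proposition~\ref{prop:suslin}.
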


\begin{proof}
Since $\tilde{\Z}(n)[n]$ is motivically equivalent to $\tilde{\mathrm{c}}(\gm^{\wedge n})$, and the latter is a direct factor of $\tilde{\mathrm{c}}(\gm^{\times n})/\tilde{\mathrm{c}}(1,\dots,1)$, by Corollary \ref{coro:compute-semilocal-suslin-2} it suffices to show that $\Cstar(\tilde{\mathrm{c}}(\gm^{\times n})/\tilde{\mathrm{c}}(1,\dots,1))(\hat\Delta^\bullet_K)\simeq 0$. This follows from Corollary \ref{cor:gmcase} and Proposition \ref{prop:suslin}.
\end{proof}


\section{A General Criterion}
\label{sec:general-criterion}

In this section we study when the motivic spectrum representing a generalized
cohomology theory of algebraic varieties is effective. We first recall a few facts about the slice filtration of \cite{voevodsky-slice-filtration}.

Let $\SHS$ be the motivic homotopy category of $S^1$-spectra and let $\SH$ be the stable motivic homotopy category. We have an adjunction
\[
\Sigma^\infty_T: \SHS \leftrightarrows \SH:\Omega^{\infty}_T
\]
and we write $\SH^\eff$ for the localising subcategory (in the sense of \cite[3.2.6]{Neeman}) of $\SH$
generated by the image of $\Sigma^\infty_{T}$. The inclusion $i_0:\SH^\eff\to \SH$ has a right adjoint $r_0:\SH \to \SH^\eff$ and we obtain a functor $f_0=i_0r_0:\SH \to \SH$ called the effective cover functor. More generally, we may consider the localising subcategories $ \SHS(d)$ and $\SH^\eff(d)$ of respectively $\SHS$ and $\SH$ generated by the images of $X\wedge T^d$ for $X$ smooth and $d\in\N$. We obtain a commutative diagram of functors
\[
\xymatrix{\SHS(d)\ar[r]^-{\Sigma_T^\infty}\ar[d]_-{i_d} & \SH^{\eff}(d)\ar[d]^-{i_d} \\
\SHS\ar[r]_-{\Sigma_T^\infty} & \SH}
\]
Both of the inclusions $i_d:\SHS(d)\to \SHS$ and $i_d:\SH^\eff(d)\to \SH$ admit right adjoints $r_d$ and we set $f_d=i_dr_d$ (on both categories). We obtain a sequence of endofunctors 
\[
\ldots\to f_d\to f_{d-1}\to\ldots \to f_1\to f_0
\]
and we define $s_0$, the \emph{zeroth slice functor}, as the cofiber of $f_1\to f_0$. More generally, we let $s_d$ be the cofiber of $f_{d+1}\to f_d$.

The following result is due to M. Levine (\cite[Theorems 9.0.3 and 7.1.1]{levine2008homotopy}).

\begin{lem}\label{lem:omegas0}
The following diagram of functors
\[
\xymatrix{\SH\ar[r]^-{\Omega_T^{\infty}}\ar[d]_-{s_0} & \SHS\ar[d]^-{s_0}  \\
\SH\ar[r]_-{\Omega_T^\infty} & \SHS }
\]
is commutative.
\end{lem}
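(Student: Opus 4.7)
The plan is to reduce the claim to the two theorems of Levine cited in the statement. The abstract definition of $s_0$ as the cofiber of $f_1 \to f_0$, where each $f_d$ is built via a right adjoint to the inclusion of a localizing subcategory, does not make it obvious how these functors interact with $\Omega_T^\infty$. Levine's key contribution is to supply an alternative, more concrete model for the slice filtration in terms of the homotopy coniveau tower, and this alternative model is manifestly compatible with $\Omega_T^\infty$.

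Concretely, I would proceed in two steps. First, I invoke \cite[Theorem 9.0.3]{levine2008homotopy}, which identifies the Voevodsky slice filtration $f_d$ on $\SH$ with Levine's homotopy-coniveau truncation functors; the analogous identification in $\SHS$ is part of the same comparison. Second, I use \cite[Theorem 7.1.1]{levine2008homotopy}, which expresses the coniveau truncations as being constructed levelwise from certain smooth schemes in a manner that commutes with the $T$-loop functor, thereby yielding natural equivalences $f_d \Omega_T^\infty \simeq \Omega_T^\infty f_d$ as functors $\SH \to \SHS$ for every $d \geq 0$. Specializing to $d = 0$ and $d = 1$ gives a map of cofiber sequences
\[
\Omega_T^\infty f_1 E \to \Omega_T^\infty f_0 E \to \Omega_T^\infty s_0 E
\]
compared with
\[
f_1 \Omega_T^\infty E \to f_0 \Omega_T^\infty E \to s_0 \Omega_T^\infty E,
\]
in which the first two vertical maps are equivalences.

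To conclude, I would observe that $\Omega_T^\infty$, being a right adjoint between stable (equivalently, triangulated) categories, preserves fiber sequences; in the stable setting these agree with cofiber sequences. Taking cofibers thus upgrades the equivalence on $f_1, f_0$ to an equivalence on $s_0$, which is the content of the lemma. The only genuinely difficult input is the pair of Levine theorems; granted these, the argument is a formal diagram chase with cofibers, and I would expect this last wrap-up to pose no real obstacle. The sole point requiring some care is to ensure that Levine's identification of slices respects the transition map $f_1 \to f_0$, but this compatibility is built into his construction.
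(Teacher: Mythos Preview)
Your proposal is correct and matches the paper's approach: the paper simply attributes the lemma to Levine via the same two citations \cite[Theorems 9.0.3 and 7.1.1]{levine2008homotopy} without further argument, and your write-up expands on how these combine (identification of the slice tower with the homotopy coniveau tower on both sides, plus the exactness of $\Omega_T^\infty$ to pass to cofibers).
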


One essential difference between $\SHS$ and $\SH$ is that in the latter case, the above sequence of functors extends to a sequence of endofunctors
\[
\ldots\to f_d\to f_{d-1}\to\ldots \to f_1\to f_0\to f_{-1}\to \ldots\to f_{-n}\to\ldots
\]
Let us recall the following well-known lemma for the sake of completeness.

\begin{lem} \label{lemm:eff}
Let $E \in \SH$. Then $\hocolim_{n \to \infty} f_{-n} E \to E$ is an equivalence.
\end{lem}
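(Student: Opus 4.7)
The plan is to verify the equivalence by testing against a set of compact generators of $\SH$. Recall that $\SH$ is compactly generated by the objects $G_{X,m} := \Sigma^\infty_T X_+ \wedge T^{-m}$ for $X \in \sm k$ and $m \ge 0$; equivalently, $\SH$ coincides with its smallest localizing subcategory containing all such $G_{X,m}$. Hence to show that the canonical map $\hocolim_n f_{-n} E \to E$ is an equivalence, it suffices to show that it induces an isomorphism on the graded abelian groups $[G_{X,m}, -]_*$ for every such generator.

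For a fixed generator $G = G_{X,m}$, the key observation is that $G \in \SH^\eff(-m)$ by construction, and therefore $G \in \SH^\eff(-n)$ for every $n \ge m$. Since $f_{-n} = i_{-n} r_{-n}$ with $r_{-n}$ right adjoint to the fully faithful inclusion $i_{-n}$, the unit map $f_{-n} E \to E$ induces a bijection
\[
[G, f_{-n} E] \xrightarrow{\ \cong\ } [G, E]
\]
for every $n \ge m$. In particular, the tower $\{[G, f_{-n} E]\}_n$ is eventually constant equal to $[G, E]$, with all transition maps isomorphisms from degree $m$ onward.

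Next, compactness of $G$ gives the identification
\[
[G, \hocolim_n f_{-n} E] \;\cong\; \colim_n\, [G, f_{-n} E],
\]
and the preceding paragraph identifies this colimit with $[G,E]$ in a way compatible with the natural map. Thus the canonical morphism $\hocolim_n f_{-n} E \to E$ induces isomorphisms on $[G, -]$ for every generator $G$, hence is an equivalence in $\SH$.

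The only mild subtlety, and the step one should be careful about, is making the tower $n \mapsto f_{-n} E$ into a coherent diagram so that $\hocolim$ is well defined; this is supplied by Levine's construction of the $f_{-n}$ as functors together with natural transformations $f_{-(n+1)} \to f_{-n}$. Everything else is a formal consequence of compact generation and the adjoint characterization of $f_{-n}$.
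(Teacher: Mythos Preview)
Your proof is correct and follows essentially the same approach as the paper's: test the map against compact generators, use compactness to commute $[-,-]$ past the homotopy colimit, and observe that each generator eventually lies in $\SH^\eff(-n)$ so the tower stabilizes. One terminological slip: the natural map $f_{-n}E \to E$ is the \emph{counit} of the adjunction $i_{-n} \dashv r_{-n}$, not the unit.
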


\begin{proof}
It suffices to show that for any $X \in \mathrm{Sm}_k$ and any $i, j \in \Z$ we get 
\[
\mathrm{Hom}_{\SH}(\Sigma^\infty X_+ [i] \wedge \gm^{\wedge j}, E) = \mathrm{Hom}_{\SH}(\Sigma^\infty X_+ [i] \wedge \gm^{\wedge j}, \hocolim f_{-n} E).
\] 
Since $\Sigma^\infty X_+ [i] \wedge \gm^{\wedge j}$ is compact, the right hand side is equal to $\colim_n \mathrm{Hom}_{\SH}(\Sigma^\infty X_+ [i] \wedge \gm^{\wedge j}, f_{-n} E)$. For $j > -n$, we have $\Sigma^\infty X_+ [i] \wedge \gm^{\wedge j} \in \SH^\eff(-n)$ and hence 
\[
\mathrm{Hom}_{\SH}(\Sigma^\infty X_+ [i] \wedge \gm^{\wedge j}, f_{-n} E) = \mathrm{Hom}_{\SH}(\Sigma^\infty X_+ [i] \wedge \gm^{\wedge j}, E).
\] 
The result follows.
\end{proof}

We now make use of the \emph{spectral enrichment} of $\SH$. To explain it, consider $E \in \SH$. This yields a presheaf $rE \in PSh(\mathrm{Sm}_k)$ given by $(rE)(U) = \mathrm{Hom}_{\SH}(\Sigma^\infty_T U_+, E)$. Write $\Spt(\mathrm{Sm}_k)$ for the homotopy category of \emph{spectral presheaves} (\cite[\S 1.4]{Levine06}). Then there exists a functor $R: \SH \to \Spt(\mathrm{Sm}_k)$ such that $rE = \pi_0 RE$. Indeed $R$ is constructed as the following composite
\[ \SH \xrightarrow{\Omega^\infty_T} \SHS \xrightarrow{R_0} \Spt(\mathrm{Sm}_k), \]
where $R_0$ is the (fully faithful) right adjoint of the localization functor. Now note that if $P \in \Spt(\mathrm{Sm}_k)$ is a spectral presheaf and $F/k$ is a finitely generated field extension, then we can make sense of the expression $P(\hat\Delta^\bullet_F) \in \mathrm{SH}$: it is obtained by choosing a bifibrant model of $P$ as a presheaf of spectra, and then taking the geometric realization of the induced simplicial diagram \cite[1.5]{Levine06}. If $E \in \SH$, then we abbreviate $(RE)(\hat\Delta^\bullet)$ to $E(\hat\Delta^\bullet)$. Similarly if $E \in \SHS$, then we abbreviate $(R_0E)(\hat\Delta^\bullet)$ to $E(\hat\Delta^\bullet)$

\begin{lem} \label{lemm:s0-vanishing}
Let $E \in \SH$, where $k$ is a perfect field. Then $s_0(E) \simeq 0$ if and only if for all finitely generated fields $F/k$ we have $E(\hat\Delta^\bullet_F) \simeq 0$.
\end{lem}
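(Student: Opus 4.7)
The plan is to reduce the statement from $\SH$ to $\SHS$ via the $(\Sigma_T^\infty, \Omega_T^\infty)$ adjunction together with Lemma \ref{lem:omegas0}, and then to invoke M.~Levine's identification of the zeroth slice in $\SHS$ via his coniveau tower.

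First I would observe that $s_0(E) = \mathrm{cofib}(f_1 E \to f_0 E)$ lies in $\SH^\eff$: indeed $f_0 E$ and $f_1 E$ both lie in $\SH^\eff$ (with $f_1 E \in \SH^\eff(1) \subset \SH^\eff$), and $\SH^\eff$ is closed under cofibres as a localizing subcategory. Because $\SH^\eff$ is compactly generated by the $\Sigma_T^\infty X_+$ with $X \in \sm k$, any $G \in \SH^\eff$ vanishes iff $[\Sigma_T^\infty X_+[n], G]_{\SH} = 0$ for every such $X$ and every $n \in \Z$; by the $(\Sigma_T^\infty, \Omega_T^\infty)$ adjunction, this is in turn equivalent to $\Omega_T^\infty G \simeq 0$ in $\SHS$ (since the $X_+[n]$ generate $\SHS$). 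Thus $\Omega_T^\infty$ is conservative on $\SH^\eff$. Applying this with $G = s_0(E)$ and combining with Lemma \ref{lem:omegas0}, the vanishing $s_0(E) \simeq 0$ becomes equivalent to $s_0(\Omega_T^\infty E) \simeq 0$ in $\SHS$. Moreover, since by definition $R = R_0 \circ \Omega_T^\infty$, we have $E(\hat\Delta^\bullet_F) \simeq (\Omega_T^\infty E)(\hat\Delta^\bullet_F)$ for every finitely generated $F/k$, so the hypothesis transports as well.

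After replacing $E$ by $\Omega_T^\infty E$, the lemma reduces to the following statement in $\SHS$: for $F' \in \SHS$, $s_0(F') \simeq 0$ if and only if $F'(\hat\Delta^\bullet_L) \simeq 0$ for every finitely generated field extension $L/k$. This is a direct consequence of Levine's coniveau-theoretic computation of the zeroth slice: combining \cite[Theorem 7.1.1]{levine2008homotopy} (identifying the Voevodsky slice filtration on $\SHS$ with the coniveau tower) with \cite[Theorem 9.0.3]{levine2008homotopy}, the object $s_0(F')$ is recovered from the presheaf of generic fibres $L \mapsto F'(\hat\Delta^\bullet_L)$ on finitely generated fields, in such a way that all these fibres vanish iff $s_0(F')$ does.

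The non-formal ingredient is the second step: it rests on Levine's comparison between the Voevodsky slice filtration on $\SHS$ and his coniveau tower, together with the resulting explicit recovery of $s_0$ from its generic stalks over finitely generated fields. The first step is entirely formal, depending only on the effectivity of $s_0(E)$ and the compact generation of $\SH^\eff$ by suspension spectra of smooth schemes.
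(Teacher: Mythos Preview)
Your proposal is correct and follows the same route as the paper: reduce to $\SHS$ via conservativity of $\Omega_T^\infty$ on $\SH^\eff$ together with Lemma~\ref{lem:omegas0}, then invoke Levine's identification of $s_0$ with the $E^{(0/1)}$ construction. The paper is slightly more explicit in the last step: rather than appealing globally to a ``recovery of $s_0$ from generic fibres'', it observes that the homotopy sheaves of $E^{(0/1)}$ are strictly homotopy invariant and hence unramified (Morel), so vanishing is detected on fields, and then cites the identification $E^{(0/1)}(F) \simeq E(\hat\Delta^\bullet_F)$ from \cite[proof of Lemma 5.2.1]{levine2008homotopy}; your reference to \cite[Theorem 9.0.3]{levine2008homotopy} is already absorbed into Lemma~\ref{lem:omegas0} and not needed again here.
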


\begin{proof}
By definition, we have an exact triangle 
\[
f_1E\to f_0E\to s_0(E)\to f_1E[1]
\]
and it follows that $s_0(E)\in \SH^{\eff}$. On the other hand, the adjunction between the stable categories induces an adjunction 
\[
\Sigma^\infty_T: \SHS \leftrightarrows \SH^\eff: \Omega^\infty_T
\] 
and $\Omega^\infty_T$ is conservative on $\SH^\eff$ (its left adjoint has dense image). Thus $s_0(E) \simeq 0$ if and only if $\Omega_T^{\infty}s_0(E)\simeq 0$, and the latter condition is equivalent to $s_0\Omega_T^{\infty}E\simeq 0$ by Lemma \ref{lem:omegas0}. By definition, we have $(\Omega_T^{\infty}E)(\hat \Delta^\bullet_F) = E(\hat \Delta^\bullet_F)$, and
we are thus reduced to proving that for $E\in \SHS$, we have $s_0(E)\simeq 0$ if and only if $E(\hat\Delta^\bullet_F)=0$ for $F/k$ finitely generated.

Let then $E\in \SHS$. We can (and will) choose a fibrant model for $E$, which we denote by the same letter. Now $s_0(E)$ is given by the $E^{(0/1)}$ construction of Levine (\cite[Theorem 7.1.1]{levine2008homotopy}) and then $s_0(E) \simeq 0$ if and only if we have $E^{(0/1)}\simeq 0$. Since strictly homotopy invariant sheaves are unramified (\cite[Example 2.3]{Morel08}), $E^{(0/1)}\simeq 0$ if and only if $E^{(0/1)}(F)\simeq 0$ for any finitely generated field extension $F/k$. Since $E^{(0/1)}(F) \simeq E(\hat \Delta^\bullet_F)$ (this argument is used for example in \cite[proof of Lemma 5.2.1]{levine2008homotopy}), this concludes the proof.
\end{proof}

\begin{thm} \label{thm:eff-crit}
Let $E \in \SH$, where $k$ is a perfect field. Then $E \in \SH^\eff$ if and only if for all $n \ge 1$ and all finitely generated fields $F/k$, we have $(E \wedge \gm^{\wedge n})(\hat \Delta^\bullet_F) \simeq 0$.
\end{thm}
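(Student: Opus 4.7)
The plan is to reduce both implications to Lemma \ref{lemm:s0-vanishing} via the standard shift property of the slice filtration, together with the ``convergence at $-\infty$'' statement of Lemma \ref{lemm:eff}. The key identity I want to use is that for every $n \in \Z$,
\[ f_n E \simeq f_0(E \wedge T^{-n}) \wedge T^n, \qquad s_n E \simeq s_0(E \wedge T^{-n}) \wedge T^n, \]
which follows immediately from the description of $\SH^\eff(n)$ as the localising subcategory generated by $X_+ \wedge T^n$: indeed, $\wedge T^n : \SH \to \SH$ restricts to an equivalence $\SH^\eff(m) \simeq \SH^\eff(m+n)$ intertwining the respective coreflections. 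Since $T \simeq S^1 \wedge \gm$, smashing with $T^{-n}$ is an autoequivalence and moreover $(E \wedge T^n)(\hat\Delta^\bullet_F) \simeq \Sigma^n\bigl((E \wedge \gm^{\wedge n})(\hat\Delta^\bullet_F)\bigr)$, so by Lemma \ref{lemm:s0-vanishing} the hypothesis is exactly the condition $s_{-n}(E) \simeq 0$ for all $n \geq 1$.

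For the ``only if'' direction I would argue as follows. If $E \in \SH^\eff$, then $E \wedge \gm^{\wedge n}$ is built as a homotopy colimit from the objects $\Sigma_T^\infty X_+ \wedge \gm^{\wedge n} \simeq \Sigma^{-n}(X_+ \wedge T^n)$, which all lie in $\SH^\eff(n) \subseteq \SH^\eff(1)$; hence $E \wedge \gm^{\wedge n} \in \SH^\eff(1)$. But any object of $\SH^\eff(1)$ satisfies $f_1 = f_0 = \id$ and therefore has vanishing $s_0$. Applying Lemma \ref{lemm:s0-vanishing} to $E \wedge \gm^{\wedge n}$ yields the desired vanishing of $(E \wedge \gm^{\wedge n})(\hat\Delta^\bullet_F)$.

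For the ``if'' direction, run the argument in reverse. Starting from the hypothesis and the identities above, deduce that $s_{-n}(E) \simeq 0$ for every $n \geq 1$. From the defining cofibre sequences $f_{-n+1}E \to f_{-n}E \to s_{-n}(E)$, a telescoping/induction then gives $f_{-n}E \simeq f_0(E)$ for all $n \geq 0$. Now invoke Lemma \ref{lemm:eff}: $E \simeq \hocolim_n f_{-n}E \simeq f_0(E)$, which lies in $\SH^\eff$ by construction.

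The one genuinely non-formal input is the shift identity $s_{-n}(E) \simeq s_0(E \wedge T^n) \wedge T^{-n}$; this is the step where one uses that the slice filtration is compatible with $T$-suspension, and it is precisely what lets the whole argument collapse to the already-established Lemmas \ref{lem:omegas0}, \ref{lemm:eff}, and \ref{lemm:s0-vanishing}. The rest is bookkeeping between $T$-suspensions and $\gm$-suspensions.
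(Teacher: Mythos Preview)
Your proof is correct and follows essentially the same route as the paper: translate the hypothesis via Lemma~\ref{lemm:s0-vanishing} into $s_{-n}(E)\simeq 0$ for all $n\ge 1$ using the shift compatibility of the slice filtration with $\gm$- (equivalently $T$-) suspension, deduce $f_0(E)\simeq f_{-n}(E)$ for all $n\ge 0$, and conclude via Lemma~\ref{lemm:eff}. The only difference is cosmetic---you phrase the shift identity in terms of $T$ and then convert back to $\gm$, whereas the paper works directly with $\gm^{\wedge n}$---and you spell out the ``only if'' direction that the paper leaves as ``clearly necessary''.
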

\begin{proof}
By Lemma \ref{lemm:s0-vanishing}, we know that the condition is equivalent to $s_0(E \wedge \gm^{\wedge n}) \simeq 0$. This is clearly necessary for $E \in \SH^\eff$ and we are left to prove sufficiency.

Note that $s_0(E \wedge \gm^{\wedge n}) \simeq s_{-n}(E) \wedge \gm^{\wedge n}$. Thus our condition is equivalent to $s_{-n}(E) \simeq 0$ for all $n \ge 1$, or equivalently $f_0(E) \simeq f_{-n}(E)$ for all $n \ge 0$. Consequently we get $f_0(E) \simeq \hocolim_n f_{-n}(E)$. But this homotopy colimit is equivalent to $E$, by Lemma \ref{lemm:eff}. This concludes the proof.
\end{proof}

\begin{coro} \label{coro:main}
Let $\mathcal{D}$ be a symmetric monoidal category and let 
\[
M:\SH  \leftrightarrows \mathcal{D}:U
\] 
be a pair of adjoint functors such that $M$ is symmetric monoidal.
Then $U(\1_{\mathcal{D}}) \in \SH^\eff$ if and
only if $U(M \gm^{\wedge n})(\hat \Delta^\bullet_F) \simeq 0$ for all $F/k$
finitely generated and all $n \ge 1$.
\end{coro}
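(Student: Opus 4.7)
The strategy is to apply Theorem \ref{thm:eff-crit} directly to $E = U(\1_{\mathcal{D}})$. The criterion reduces $U(\1_{\mathcal{D}}) \in \SH^\eff$ to the condition that $(U(\1_{\mathcal{D}}) \wedge \gm^{\wedge n})(\hat\Delta^\bullet_F) \simeq 0$ for every $n \geq 1$ and every finitely generated field $F/k$. Thus the entire content of the corollary is the projection-formula identification
\[
U(\1_{\mathcal{D}}) \wedge \gm^{\wedge n} \simeq U(M\gm^{\wedge n})
\]
in $\SH$: given this, applying the spectral evaluation $(-)(\hat\Delta^\bullet_F)$ to both sides turns the conclusion of Theorem \ref{thm:eff-crit} into the hypothesis of the corollary.

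To establish the projection formula, I would construct the natural map $U(A) \wedge B \to U(A \otimes MB)$ as the adjoint of the composite
\[
M(U(A) \wedge B) \simeq MU(A) \otimes MB \xrightarrow{\counit \otimes \id} A \otimes MB,
\]
which uses only that $M$ is symmetric monoidal together with the counit of the adjunction. I would then check this map is an equivalence whenever $B$ is $\otimes$-invertible. In our setting $B = \gm^{\wedge n}$ is invertible in $\SH$, because by construction $\pone \simeq S^1 \wedge \gm$ is invertible in the stable motivic homotopy category; and a symmetric monoidal functor preserves invertibility, so $M\gm^{\wedge n}$ is invertible in $\mathcal{D}$ as well. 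A Yoneda computation then yields, for every $C \in \SH$,
\begin{align*}
\Hom_{\SH}(C, U(A) \wedge B)
&\simeq \Hom_{\SH}(C \wedge B^{-1}, U(A)) \\
&\simeq \Hom_{\mathcal{D}}(M(C \wedge B^{-1}), A) \\
&\simeq \Hom_{\mathcal{D}}(MC \otimes (MB)^{-1}, A) \\
&\simeq \Hom_{\mathcal{D}}(MC, A \otimes MB) \\
&\simeq \Hom_{\SH}(C, U(A \otimes MB)).
\end{align*}
Taking $A = \1_{\mathcal{D}}$ and $B = \gm^{\wedge n}$ produces the desired equivalence.

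There is no genuine obstacle: the substantive work has already been absorbed into Theorem \ref{thm:eff-crit}, and this corollary is purely a formal bookkeeping exercise that combines the effectivity criterion with the projection formula for an adjunction whose left adjoint is symmetric monoidal, applied to the invertible object $\gm^{\wedge n}$.
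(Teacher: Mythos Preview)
Your proof is correct and follows essentially the same approach as the paper: reduce to Theorem \ref{thm:eff-crit} via the projection formula $U(\1_{\mathcal D}) \wedge \gm^{\wedge n} \simeq U(M\gm^{\wedge n})$, established by the same Yoneda computation. The paper merely packages this projection formula as a separate lemma (Lemma \ref{lemm:inv}), stated slightly more generally for rigid rather than invertible objects, but the argument is identical.
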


\begin{proof}
Let $E = U(\1_{\mathcal{D}})$. Note that by Lemma \ref{lemm:inv} below, we have $U(M(\gm^{\wedge n})) \simeq E \wedge \gm^{\wedge n}$. Thus the result reduces to Proposition \ref{thm:eff-crit}.
\end{proof}

For the convenience of the reader, we include a proof of the following well-known result.

\begin{lem} \label{lemm:inv}
Let $M: \mathcal{C} \leftrightarrows \mathcal{D}: U$ be an adjunction of
symmetric monoidal categories, with $M$ symmetric monoidal. Then for any
rigid (e.g. invertible) object $G \in \mathcal{C}$ and any $E \in \mathcal{D}$, there is a
canonical isomorphism $U(E \wedge MG) \simeq U(E) \wedge G$.
\end{lem}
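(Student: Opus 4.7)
The plan is to prove the claim by the Yoneda lemma, producing a natural isomorphism of the functors represented by $U(E \wedge MG)$ and $U(E) \wedge G$ on $\mathcal{C}$. The key input is that a symmetric monoidal left adjoint preserves duals: if $G$ is rigid with dual $G^\vee$, then $MG$ is rigid in $\mathcal{D}$ with dual canonically isomorphic to $M(G^\vee)$. Moreover, for a rigid object $G$, smashing $\wedge G$ is both left and right adjoint to $\wedge G^\vee$.

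Concretely, for any $X \in \mathcal{C}$, I would chain the natural isomorphisms
\begin{align*}
\homm{\mathcal{C}}{X}{U(E \wedge MG)}
  &\simeq \homm{\mathcal{D}}{MX}{E \wedge MG} \\
  &\simeq \homm{\mathcal{D}}{MX \wedge (MG)^\vee}{E} \\
  &\simeq \homm{\mathcal{D}}{M(X \wedge G^\vee)}{E} \\
  &\simeq \homm{\mathcal{C}}{X \wedge G^\vee}{UE} \\
  &\simeq \homm{\mathcal{C}}{X}{UE \wedge G},
\end{align*}
using the adjunction $M \dashv U$ in the first and fourth steps, rigidity of $MG$ (respectively of $G$) in the second and fifth steps, and the fact that $M$ is symmetric monoidal together with $M(G^\vee) \simeq (MG)^\vee$ in the third step. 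All isomorphisms are natural in $X$, so the Yoneda lemma produces the desired canonical isomorphism $U(E \wedge MG) \simeq U(E) \wedge G$.

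The only point requiring a little care is the identification $M(G^\vee) \simeq (MG)^\vee$: this follows formally from the fact that a strong symmetric monoidal functor sends a duality datum $(G, G^\vee, \eta, \epsilon)$ in $\mathcal{C}$ to a duality datum $(MG, MG^\vee, M\eta, M\epsilon)$ in $\mathcal{D}$, using the monoidal structure maps of $M$ to identify $M(G \wedge G^\vee) \simeq MG \wedge MG^\vee$ and $M(\1_{\mathcal{C}}) \simeq \1_{\mathcal{D}}$. In the application we have in mind $G = \gm^{\wedge n}$ is invertible, so the dual is simply $\gm^{\wedge -n}$ and the argument is entirely formal; no further ingredients are needed.
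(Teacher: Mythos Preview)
Your proof is correct and follows essentially the same approach as the paper: both argue via the Yoneda lemma using the identical chain of natural isomorphisms, with the key input being that the symmetric monoidal functor $M$ sends the dual $G^\vee$ to a dual of $MG$. Your write-up is slightly more explicit about why $M(G^\vee)\simeq (MG)^\vee$, but otherwise the arguments coincide.
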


\begin{proof}
Let $DG$ be the dual object of $G$. As $M$ is symmetric monoidal, we see that $MG$ also admits a dual object, namely $M(DG)$.
For any object $F \in \mathcal{C}$, we get $\mathrm{Hom}_{\mathcal C}(F, U(E \wedge MG)) = \mathrm{Hom}_{\mathcal D}(MF, E
\wedge MG) = \mathrm{Hom}_{\mathcal D}(MF\wedge M(DG), E) = \mathrm{Hom}_{\mathcal D}(M(F \wedge DG), E) = \mathrm{Hom}_{\mathcal C}(F \wedge DG, UE) = \mathrm{Hom}_{\mathcal C}(F, UE \wedge G)$. Thus we conclude by the Yoneda lemma.
\end{proof}

We can simplify this criterion in a special case.

\begin{coro} \label{corr:main-simplified}
Consider the following diagram of functors
\[
\begin{split}
\xymatrix@C=30pt@R=24pt{
\SHS\ar@<+2pt>^-{M_0}[r]\ar[d]_-{\Sigma^\infty_T}
 & \mathcal D_0\ar[d]^-{L}
     \ar@<+2pt>^-{U_0}[l] \\
\SH\ar@<+2pt>^-{M}[r]
 & \mathcal D\ar@<+2pt>^-{U}[l]
}
\end{split}
\]
where the rows are adjunctions, $M_0, M$ and $L$ are symmetric monoidal and $LM_0 \simeq M\Sigma^\infty_T$.
Suppose furthermore that $L$ is fully faithful and has a right adjoint $R$.

Then $U(\1_{\mathcal D}) \in \SH^\eff$ if and only if $U_0(M_0 \gm^{\wedge n})(\hat \Delta^\bullet_F) \simeq 0$ for $F$ as in Corollary \ref{coro:main}.
\end{coro}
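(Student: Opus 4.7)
The plan is to reduce the statement to Corollary \ref{coro:main} by showing that, for $G \in \SHS$, the object $U(M \Sigma^\infty_T G) \in \SH$ and $U_0(M_0 G) \in \SHS$ have the same underlying spectral presheaf, or at least agree after evaluation on the cosimplicial scheme $\hat \Delta^\bullet_F$. Applying this with $G = \gm^{\wedge n}$ will convert the effectivity criterion of Corollary \ref{coro:main} into the criterion in the conclusion.

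The first step is to take right adjoints of the given equivalence $L M_0 \simeq M \Sigma^\infty_T$ of symmetric monoidal left adjoints. Since the right adjoint of a composition reverses order, this yields a canonical natural equivalence $U_0 R \simeq \Omega^\infty_T U$ of functors $\mathcal{D} \to \SHS$. The second step is to use that $L$ is fully faithful, which is equivalent to $R L \simeq \id_{\mathcal{D}_0}$. Combining these, for any $X \in \SHS$ we obtain
\[
\Omega^\infty_T U(M \Sigma^\infty_T X) \;\simeq\; \Omega^\infty_T U L M_0 X \;\simeq\; U_0 R L M_0 X \;\simeq\; U_0 M_0 X.
\]

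The third step is to recall from Section \ref{sec:general-criterion} that, for $E \in \SH$, the evaluation $E(\hat\Delta^\bullet_F)$ is computed by applying $R_0 \Omega^\infty_T$ (where $R_0$ is the right adjoint to the localization from spectral presheaves to $\SHS$) and then evaluating on $\hat\Delta^\bullet_F$; similarly for an object of $\SHS$ one applies $R_0$ directly. Since the equivalence above is natural, taking $X = \gm^{\wedge n}$ and applying $R_0$ and then evaluating on $\hat\Delta^\bullet_F$ gives
\[
U(M \gm^{\wedge n})(\hat\Delta^\bullet_F) \;\simeq\; U_0(M_0 \gm^{\wedge n})(\hat\Delta^\bullet_F).
\]
The fourth and final step is to invoke Corollary \ref{coro:main}: $U(\1_\mathcal{D}) \in \SH^\eff$ if and only if the left-hand side vanishes for all $n \ge 1$ and all finitely generated $F/k$, which by the displayed identification is equivalent to the vanishing of the right-hand side, giving the claim.

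The only genuine subtlety is that the identifications above must be carried out at the level of the spectral enrichment rather than merely in the triangulated category $\SH$, so that the evaluation $(-)(\hat\Delta^\bullet_F)$ makes sense; this is purely a matter of keeping track of the derived functors involved (bifibrant replacement of presheaves of spectra as in \cite[\S 1.4--1.5]{Levine06}) and does not require any new input. No geometric ingredient beyond the formal adjunction data and the fully faithfulness of $L$ is used.
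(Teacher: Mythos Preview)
Your proof is correct and follows essentially the same route as the paper: pass to right adjoints to get $\Omega^\infty_T U \simeq U_0 R$, use $RL \simeq \id$ from full faithfulness of $L$, deduce $\Omega^\infty_T U(LM_0 X) \simeq U_0 M_0 X$, and then invoke Corollary~\ref{coro:main} together with $M\gm^{\wedge n} \simeq LM_0 \gm^{\wedge n}$. Your remark about tracking the identifications at the level of spectral presheaves is a reasonable gloss on what the paper leaves implicit.
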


\begin{proof}
First, observe that there is an isomorphism $\Omega^\infty_T U \simeq U_0 R$ since $LM_0 \simeq M\Sigma^\infty_T$. Moreover, $R L \simeq \id$ since $L$ is assumed to be fully faithful. For any $E \in \mathcal{D}_0$, we then get $\Omega^\infty_T ULE \simeq U_0 R L E \simeq U_0 E$. Next,
\[
(ULE)(\hat \Delta^\bullet_F) = (\Omega^\infty_T ULE)(\hat \Delta^\bullet_F) \simeq (U_0 E)(\hat \Delta^\bullet_F)
\] 
where the first equality is by definition.

By Corollary \ref{coro:main}, we have $U(\1_{\mathcal D}) \in \SH^\eff$ if and only if $U(M \gm^{\wedge n})(\hat \Delta^\bullet_F) \simeq 0$ for $F$ as stated. Note that $M \gm^{\wedge n} \simeq LM_0  \gm^{\wedge n}$ by assumption. Hence by the first paragraph, we find that $U(M \gm^{\wedge n})(\hat \Delta^\bullet_F) \simeq (U_0 M_0 \gm^{\wedge n})(\hat \Delta^\bullet_F)$. This concludes the proof.
\end{proof}


\section{Application to MW-Motives}\label{sec:MWmotives}

In this section, we apply the result of the previous section to the category of MW-motives. We have a diagram of functors 
\[
\begin{split}
\xymatrix@C=30pt@R=24pt{
\SHS\ar@<+2pt>[r]^-N\ar[d]_-{\Sigma^\infty_T}
 & \mathrm{D}_{\A^1}^{\mathrm{eff}}(k)\ar@<+2pt>^{\derL \tilde \gamma^*}[r]\ar[d]_-{\Sigma^\infty_T}
     \ar@<+2pt>[l]^-K
 & \widetilde{\mathrm{DM}}^{\mathrm{eff}}(k)\ar[d]_-{\Sigma^\infty_T}
     \ar@<+2pt>[l]^-{\gamma_*} \\
\SH\ar@<+2pt>[r]^-N
 & \mathrm{D}_{\A^1}(k)\ar@<+2pt>^{\derL \tilde \gamma^*}[r]
		\ar@<+2pt>[l]^-K
 & \widetilde{\mathrm{DM}}(k)
		\ar@<+2pt>[l]^-{\gamma_*}
}
\end{split}
\]
where the vertical functors are given by $T$-stabilization, the adjunctions in the right-hand square are those discussed in Section \ref{sec:recollections}, and the adjunctions in the left-hand square are derived from the classical Dold-Kan correspondence (see \cite[5.2.25]{Cisinski12} for the unstable version, and \cite[5.3.35]{Cisinski12} for the $\pone$-stable version). Both $N$ and $\derL \tilde \gamma^*$ commute with $T$-stabilization, and the stabilization functor 
\[
\Sigma_T^\infty: \widetilde{\mathrm{DM}}^{\mathrm{eff}}(k)\to \widetilde{\mathrm{DM}}(k)
\] 
is fully faithful by \cite[Corollary 5.0.2]{Fasel16b}. It follows that the diagram
\[
\begin{split}
\xymatrix@C=30pt@R=24pt{
\SHS\ar@<+2pt>[r]^-{\derL \tilde \gamma^*N}\ar[d]_-{\Sigma^\infty_T}
 & \widetilde{\mathrm{DM}}^{\mathrm{eff}}(k)\ar[d]^-{\Sigma^\infty_T}
     \ar@<+2pt>[l]^-{K\gamma_*} \\
\SH\ar@<+2pt>[r]^-{\derL \tilde \gamma^*N}
 & \widetilde{\mathrm{DM}}(k)
		\ar@<+2pt>[l]^-{K\gamma_*}
}
\end{split}
\]
satisfies the assumptions of Corollary \ref{corr:main-simplified}. We can thus apply Theorem \ref{thm:rational} to obtain the following result, where $M:=\derL \tilde \gamma^*N$ and $U:=K\gamma_*$.

\begin{coro} \label{coro:effective}
In the stabilized adjunction $M: \SH \leftrightarrows  \widetilde{\mathrm{DM}}(k,\Z): U$,
we have $U(\1) \in \SH^\eff$.
\end{coro}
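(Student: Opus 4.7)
The plan is to apply Corollary \ref{corr:main-simplified} directly to the commutative square displayed just before the statement, taking $\mathcal D_0 = \widetilde{\mathrm{DM}}^{\mathrm{eff}}(k)$, $\mathcal D = \widetilde{\mathrm{DM}}(k)$, $L = \Sigma^\infty_T$, $M_0 = \derL \tilde\gamma^* N$, and $U_0 = K\gamma_*$. The hypotheses are all in place: $M_0$ and $M$ are symmetric monoidal as composites of symmetric monoidal left adjoints, the identification $LM_0 \simeq M\Sigma^\infty_T$ is exactly the commutativity of the square (which follows from the fact, noted in the preceding paragraph, that $N$ and $\derL \tilde\gamma^*$ commute with $T$-stabilization), and $L = \Sigma^\infty_T$ is fully faithful by \cite[Corollary 5.0.2]{Fasel16b} with right adjoint $\Omega^\infty_T$. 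Invoking the corollary reduces the effectivity of $U(\1)$ to the vanishing
\[
(U_0 M_0 \gm^{\wedge n})(\hat \Delta^\bullet_F) \simeq 0
\]
for every finitely generated extension $F/k$ and every $n \ge 1$.

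The second step is to identify the spectral presheaf $U_0 M_0 \gm^{\wedge n}$. Through the Dold-Kan pair $K \dashv N$, the functor $N$ sends $\gm^{\wedge n}$ to a (shift of a) representable complex of presheaves of abelian groups, and $\derL \tilde\gamma^*$ carries it to the MW-motive $\tilde\Z\{n\} = \tilde\Z(n)[n]$ in $\widetilde{\mathrm{DM}}^{\mathrm{eff}}(k)$. The right adjoint $K\gamma_*$ recovers, as a spectral presheaf, the underlying complex of the motivically fibrant replacement $\mathrm{L}_{\mot}(\tilde\Z\{n\})$. Evaluating on $\hat \Delta^\bullet_F$, the desired vanishing becomes $\mathrm{L}_{\mot}(\tilde\Z(n))(\hat \Delta^\bullet_F) \simeq 0$, up to the shift by $n$ which does not affect contractibility.

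This last vanishing is exactly the content of Theorem \ref{thm:rational}, so the proof is complete. All of the substantive work has in fact already been done: the general effectivity criterion (Theorem \ref{thm:eff-crit} and its packaged form Corollary \ref{corr:main-simplified}), the comparison on semi-local schemes between the Suslin complex of a presheaf with MW-transfers and its motivic localization (Corollary \ref{coro:compute-semilocal-suslin-2}), and the rational contractibility of $\tilde{\mathrm c}(\gm^{\times n})/\tilde{\mathrm c}(1,\dots,1)$ from Corollary \ref{cor:gmcase}. The only mildly delicate point, and what I would check most carefully, is the bookkeeping that matches $M_0(\gm^{\wedge n})$ with $\tilde\Z\{n\}$ and respects the distinction between $\tilde\Z\{n\}$ and $\tilde\Z(n)[n]$, so that the hypothesis of Theorem \ref{thm:rational} is applied in the correct weight and degree.
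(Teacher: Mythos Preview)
Your argument follows the paper's approach exactly: apply Corollary~\ref{corr:main-simplified} to the displayed square and reduce to Theorem~\ref{thm:rational}. The identification $M_0(\gm^{\wedge n}) \simeq \tilde\Z\{n\}$ that you flag as the delicate point is in fact routine.

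The step you pass over too quickly, and which the paper singles out as ``the only subtle point'', is the compatibility between evaluating the \emph{spectral presheaf} $U_0 E$ at $\hat\Delta^\bullet_F$ (a geometric realization of a simplicial spectrum) and evaluating the \emph{chain complex} $\mathrm{L}_{\mot}(\tilde\Z\{n\})$ at $\hat\Delta^\bullet_F$ (a totalization of a bicomplex, then Dold--Kan). Concretely, one needs $K_s\bigl(E(\hat\Delta^\bullet_F)\bigr) \simeq (U_0 E)(\hat\Delta^\bullet_F)$ for a fibrant $E \in \DMteZ$, where $K_s: \Der(\mathrm{Ab}) \to \mathrm{SH}$ is the stable Dold--Kan functor. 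This amounts to knowing that $K_s$ preserves the relevant homotopy colimits (filtered colimits for the essentially smooth evaluation, geometric realizations for the simplicial direction), which holds because $K_s$ is an exact functor of stable categories whose left adjoint preserves compact generators. Once this is noted, your proof is complete and matches the paper's.
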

\begin{proof}
Having Theorem \ref{thm:rational} and Corollary \ref{corr:main-simplified} at hand, the only subtle point is to show the following: if $E \in \DMteZ$ has a fibrant model still denoted by $E$, then $K_s(E(\hat\Delta^\bullet_F)) \simeq (U_0 E)(\hat\Delta^\bullet_F)$, where $U_0 = K\gamma^*$. Here $K_s: D(Ab) \to \mathrm{SH}$ denotes the classical stable Dold-Kan correspondence. Essentially this requires us to know that $K_s$ preserves homotopy colimits (at least we need filtered homotopy colimits and geometric realizations). This is well-known. In fact since this is a stable functor, it preserves all homotopy colimits if and only if it preserves arbitrary sums, if and only if its left adjoint preserves the compact generator(s), which is clear.
\end{proof}

We are now in position to prove our main result. To this end, recall that the motivic spectrum of abstract generalized motivic cohomology $\H\tZ \in \SH$ was defined in \cite[\S 4]{bachmann-very-effective} as the effective cover of the homotopy module of Milnor-Witt $K$-theory. Equivalently, $\H\tZ$ is the effective cover of the homotopy module $\{\underline\pi_{n,n}(\mathbb{S})\}_n$, where $\mathbb{S}$ is the sphere spectrum.

\begin{thm} \label{thm:comparison}
Let $k$ be an infinite perfect field of exponential characteristic $e \ne 2$ and let 
\[
M: \SH \leftrightarrows  \widetilde{\mathrm{DM}}(k): U
\]
be the above adjunction. Then the spectrum $U(\1)$ representing MW-motivic cohomology with $\Z$-coefficients is canonically isomorphic to the spectrum $\H\tZ$ representing abstract generalized motivic cohomology with $\Z$-coefficients. In particular, $U(\1) \in \SH^\eff$.
\end{thm}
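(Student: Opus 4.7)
My plan follows the three-stage strategy sketched in the introduction: prove effectivity, identify the spectra as objects of the heart of the effective $t$-structure, and then compare them via a canonical map. The last sentence of the theorem, $U(\1) \in \SH^\eff$, is immediate from Corollary \ref{coro:effective}, which also places both $U(\1)$ and $\H\tZ$ in the ambient category $\SH^\eff$.

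For the identification in the heart, I would compute the bigraded homotopy sheaves of $U(\1)$: by the adjunction $M \dashv U$, the presheaves $\underline{\pi}_{p,q}(U(\1))$ are identified with suitable MW-motivic cohomology groups, and the fundamental computations of \cite{Calmes14b,Deglise16} yield $\underline{\pi}_{n,n}(U(\1)) \simeq \sKMW_n$ together with vanishing of $\underline{\pi}_{p,n}(U(\1))$ for $p > 0$. In particular, the unit $\eta : \mathbb{S} \to U(\1)$ induces an isomorphism of homotopy modules $\{\sKMW_n\}_n = \{\underline{\pi}_{n,n}(\mathbb{S})\}_n \xrightarrow{\sim} \{\underline{\pi}_{n,n}(U(\1))\}_n$ (Morel's theorem on the left), and $U(\1)$ sits in the heart of the effective $t$-structure of \cite[Proposition 4]{bachmann-very-effective}.

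For the comparison, the vanishing above places $U(\1)$ in the non-positive part of Morel's $t$-structure, so the Postnikov truncation yields a canonical map $U(\1) \to H\{\sKMW_n\}_n$ in $\SH$. The universal property of the effective cover $\H\tZ := f_0 H\{\sKMW_n\}_n$, combined with $U(\1) \in \SH^\eff$, then produces a canonical morphism $U(\1) \to \H\tZ$ in $\SH^\eff$; by construction it induces an isomorphism on the associated homotopy module, and since both source and target lie in the heart of the effective $t$-structure, it is an equivalence. The main obstacle I foresee, beyond the effectivity criterion already handled by Corollary \ref{coro:effective}, is the identification $\underline{\pi}_{p,q}(U(\1))$ with MW-motivic cohomology in the correct bidegree and the verification of the off-diagonal vanishing: this requires careful reconciliation of the $\SH$-side bigrading with the weight/degree conventions of $\widetilde{\mathrm{DM}}(k)$, together with the passage through the Dold--Kan correspondence built into the definition of $M \dashv U$.
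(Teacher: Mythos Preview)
Your overall strategy matches the paper's: effectivity via Corollary \ref{coro:effective}, then place both spectra in $\SH^{\eff,\heart}$, then compare via a canonical map. The difference lies in how the comparison map is produced and how much input is used.

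The paper works entirely inside the \emph{effective} $t$-structure. It applies $\tau_{\le 0}^\eff$ to the unit $\1 \to U(\1)$ and uses the identification $H\tZ \simeq \tau_{\le 0}^\eff \1$ together with the single weight-zero computation: for $X$ local, $H^{n,0}(X,\tilde{\Z})=0$ for $n\neq 0$ and $H^{0,0}(X,\tilde{\Z})=\KMW_0(X)$. This immediately gives $U(\1)\in \SH^{\eff}_{\le 0}$, hence $\tau_{\le 0}^\eff U(\1)\simeq U(\1)$, and the resulting $\alpha\colon H\tZ \to U(\1)$ is a map in the heart checked on $\underline{\pi}_{0,0}$. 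No negative-weight computations and no identification of the full homotopy module are needed.

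Your route through Morel's $t$-structure and the homotopy module $\{\sKMW_n\}_n$ works, but is more expensive: it requires knowing $\underline{\pi}_{n,n}(U(\1))\simeq \sKMW_n$ for \emph{all} $n$ (the positive-$n$ case is negative-weight MW-cohomology, so cancellation is implicitly invoked), together with the off-diagonal vanishing in all weights. There is also a direction slip: if the vanishing you cite places $U(\1)$ in the non-positive part of Morel's $t$-structure, then the canonical truncation map goes $H\{\sKMW_n\}_n \to U(\1)$, not the other way; applying $f_0$ (a right adjoint) then yields $H\tZ \to U(\1)$, which is exactly the paper's map. So your argument lands in the right place, but with the arrow reversed from what you wrote, and via a detour that the paper's use of $\tau_{\le 0}^\eff$ avoids.
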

\begin{proof}
For an effective spectrum $E \in \SH^\eff$, let $\tau_{\le 0}^\eff E \in \SH^{\eff}_{\le 0}$ denote the truncation in the effective homotopy $t$-structure \cite[Proposition 4]{bachmann-very-effective}.

We note that for $X$ local, (1) $H^{n, 0}(X, \tilde{\Z}) = 0$ for $n \ne 0$ and (2) $\H^{0, 0}(X, \tilde{\Z})= \sKMW_0(X)$ The unit map $\1 \to U(\1)$ induces $\alpha: \H\tZ \simeq \tau_{\le 0}^\eff \1 \to \tau_{\le 0}^\eff U(\1) \simeq U(\1)$, where the first equivalence is by definition and the second since $U(\1) \in \SH^\eff_{\le 0}$, by (1). Now $\alpha$ is a map of objects in $\SH^{\eff,\heart}$ (again by (1)) and hence an equivalence if and only if it induces an isomorphism on $\underline{\pi}_{0,0}$. This follows from (2).
\end{proof}

Next, we would like to show that ordinary motivic cohomology is represented by an explicit (pre-)sheaf in $\DMt$. We start with the following lemma (see also \cite[Theorem 5.3]{Garkusha17} and \cite[Theorem 1.1]{Elmanto17}).

\begin{lem} \label{lemm:modules}
Under the assumptions of the theorem, the category $\DMtinv$ is equivalent
to the category of highly structured modules over $U(\1_{\DMtinv}) \simeq H\tilde{\Z}[1/e]$.
\end{lem}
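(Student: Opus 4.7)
The plan is to apply a symmetric monoidal version of the Schwede--Shipley / Lurie monadicity theorem: for a symmetric monoidal adjunction $L : \mathcal{C} \rightleftarrows \mathcal{D} : R$ of presentable stable $\infty$-categories with $L$ symmetric monoidal, the lax monoidal right adjoint factors canonically as
\[
\tilde R : \mathcal{D} \to \mathrm{Mod}_{R\1_{\mathcal{D}}}(\mathcal{C}),
\]
and $\tilde R$ is an equivalence provided $R$ preserves small colimits and is conservative. I would apply this to the base change $M : \SH[1/e] \rightleftarrows \DMtinv : U$ of the adjunction from Section \ref{sec:MWmotives}, combined with the identification $U(\1_{\DMtinv}) \simeq H\tilde{\Z}[1/e]$ obtained from Theorem \ref{thm:comparison} after inverting $e$.

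Two hypotheses need to be verified. First, colimit preservation: since $U$ is a right adjoint, this is equivalent to $M$ preserving compact objects. The compact generators $\Sigma^\infty_T X_+[i] \wedge \gm^{\wedge j}$ of $\SH[1/e]$ are sent by $M$ to the corresponding Tate-twisted MW-motives of smooth schemes, which are compact generators of $\DMtinv$, so this is immediate. Second, conservativity of $U$: by construction $U = K \circ \gamma_*$, where $\gamma_*$ is conservative by the general statement recalled after diagram \eqref{eq:stable}, and the motivic Dold--Kan functor $K$ is conservative because it identifies $\mathrm{D}_{\A^1}(k,\Z[1/e])$ with the full subcategory of $H\Z[1/e]$-modules in $\SH[1/e]$ (by \cite[5.3.35]{Cisinski12}). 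Composing two conservative functors, $U$ is conservative.

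The step I expect to need the most care is the invocation of the abstract recognition theorem itself: it must be applied in a sufficiently rigid $\infty$-categorical or model-categorical framework so that $U(\1_{\DMtinv})$ acquires a canonical $E_\infty$-algebra structure and $U$ naturally lifts through the forgetful functor from modules. One should check that such a framework is provided by the underlying symmetric monoidal model structures from Section \ref{sec:MWmotives} (and that base change to $\Z[1/e]$ preserves all relevant structure). Granted this, combining the equivalence $\tilde U$ with the identification of $U(\1_{\DMtinv})$ yields the lemma, and parallels the analogous identifications in \cite[Theorem 5.3]{Garkusha17} and \cite[Theorem 1.1]{Elmanto17}.
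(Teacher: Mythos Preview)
Your monadicity approach is close to the paper's, but the recognition theorem as you state it is missing a hypothesis. Conservativity and colimit-preservation of $U$ yield only that $\DMtinv$ is the category of algebras over the \emph{monad} $UM$ on $\SH[1/e]$; identifying this monad with $U(\1)\otimes(-)$ requires the projection formula $U(M(c)\otimes d)\simeq c\otimes U(d)$, which you do not verify (compare \cite[Proposition~5.29]{mathew2017nilpotence}, where the right adjoint is required to be linear over the source). The projection formula does hold here, but for a reason your argument never surfaces: it holds for dualizable $c$ by Lemma~\ref{lemm:inv}, and hence for all $c$ once one knows that $\SH[1/e]$ is compact-rigidly generated \cite[Corollary~B.2]{levine2013algebraic}. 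This is precisely why $e$ must be inverted, a point your two checks (both valid integrally) leave unexplained. The paper's proof uses the same rigidity input but packages it differently, invoking \cite[Lemma~22]{bachmann-hurewicz}: for an adjunction between compact-rigidly generated categories, $U'M'(\1_{\mathcal M})\simeq\1_{\mathcal M}$ already forces an equivalence.

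A smaller issue: your justification for the conservativity of $K$ via an identification of $\mathrm{D}_{\A^1}(k,\Z[1/e])$ with $H\Z[1/e]$-modules is circular in flavor (that identification is the same kind of statement you are proving), and \cite[5.3.35]{Cisinski12} is cited in the paper only for the construction of the Dold--Kan adjunction, not for any such module description. A direct argument is available: $N$ carries the compact generators of $\SH$ onto a generating family for $\mathrm{D}_{\A^1}(k)$, whence its right adjoint $K$ is conservative.
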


\begin{proof}
Let $\mathcal{M}$ be this category of modules. By abstract nonsense \cite[Construction 5.23]{mathew2017nilpotence} there is an
induced adjunction 
\[
M' :\mathcal{M} \leftrightarrows \DMtinv: U'
\] 
which satisfies $U'M' (\1_{\mathcal M}) \simeq \1_{\mathcal M}$. Under our assumptions, the category
$\SH[1/e]$ is compact-rigidly generated \cite[Corollary B.2]{levine2013algebraic} and hence so are the categories $\mathcal{M}$ and 
$\DMtinv$. It follows that $M'$ and $U'$ are inverse equivalences, see e.g.
\cite[Lemma 22]{bachmann-hurewicz}.
\end{proof}

\begin{coro} \label{coro:modules-Z}
Under the same assumptions, the presheaf $\Z \in \DMt$ represents
ordinary motivic cohomology with $\Z$-coefficients.
\end{coro}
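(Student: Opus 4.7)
The plan is to identify $\Z \in \DMt$ with $\pi_*(\1)$, where $\1$ denotes the unit of Voevodsky's category of motives, and then apply the composite adjunction to reduce to the classical representability of motivic cohomology.

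First, I would observe that the presheaf $\Z \in \DMt$ coincides with $\pi_*(\1)$: both are the constant Nisnevich sheaf $\Z$ on $\sm k$ equipped with the canonical MW-transfer structure that factors through the ordinary Voevodsky transfers via the functor $\pi: \cor k \to \mathrm{Cor}_k$ recalled in Section \ref{sec:recollections}.

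Second, by the adjunction $\derL\pi^* \dashv \pi_*$, for any smooth $X$ with MW-motive $\tilde\Z(X) \in \DMt$ one has
\[ \mathrm{Hom}_{\DMt}(\tilde\Z(X), \Z[p](q)) \simeq \mathrm{Hom}_{\mathrm{DM}(k)}(\derL\pi^* \tilde\Z(X), \Z[p](q)). \]
Since $\derL\pi^*$ is monoidal and sends representable MW-sheaves to their Voevodsky counterparts, the right-hand side is precisely $H^{p,q}(X, \Z)$, the ordinary motivic cohomology of $X$.

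Third, to also identify the underlying spectrum $U(\Z) \in \SH$, I would compose further with the adjunction $M \dashv U$. The identity $\pi\circ\tilde\gamma = \gamma$ implies that the composite right adjoint $U\circ \pi_*: \mathrm{DM}(k,\Z[1/e]) \to \SH$ agrees with the classical right adjoint, which by R\"ondigs--\O stv\ae r (in characteristic zero) or Spitzweck sends $\1$ to $H\Z[1/e]$. Therefore $U(\Z) \simeq H\Z[1/e]$, consistent with the module-theoretic picture of Lemma \ref{lemm:modules}.

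The main obstacle is verifying that the composite of the two Quillen adjunctions indeed realizes the classical $\SH \leftrightarrows \mathrm{DM}(k,\Z[1/e])$ adjunction at the model-categorical, not merely homotopical, level; this is a formal consequence of $\pi\circ\tilde\gamma = \gamma$ together with the compatibility of the chosen model structures on $\cor k$ and $\mathrm{Cor}_k$, so no genuinely new computation beyond the setup of Section \ref{sec:recollections} is required.
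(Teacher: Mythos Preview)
Your identification in step~1, that $\Z \in \DMt$ coincides with $\pi_*(\1_{\mathrm{DM}})$, is the entire difficulty, and your justification does not establish it. What you observe is that the \emph{underived} $\pi_*$ (restriction along $\pi\colon\cor k\to\mathrm{Cor}_k$) takes the constant sheaf $\Z$ to the constant MW-sheaf $\Z$; this indeed gives $R\pi_*^{\eff}(\Z)\simeq\Z$ in $\DMteZ$, since $\pi_*$ does not change the underlying complex. But in the \emph{stable} category one only gets a comparison map
\[
\Sigma^\infty\bigl(\pi_*^{\eff}\Z\bigr)\;\longrightarrow\;\pi_*^{\mathrm{stab}}\bigl(\Sigma^\infty\Z\bigr)=\pi_*(\1_{\mathrm{DM}}),
\]
and showing this is an equivalence is not formal: it amounts to knowing that $\pi_*(\1_{\mathrm{DM}})$ lies in the essential image of $\Sigma^\infty$, or equivalently to a projection formula $\pi_*^{\eff}(E)\otimes\tilde\Z(n)\simeq\pi_*^{\eff}(E(n))$. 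Lemma~\ref{lemm:inv} does not apply here because $\tilde\Z(n)$ is not dualizable in the effective category. In effect you are assuming the effectivity statement that the paper works hard to prove.

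The paper's proof confronts exactly this point. It builds a map $\alpha\colon H\Z\to U(\Z)$ inside $\SH^{\eff,\heart}$ and then shows $U(\Z)[1/e]\in\SH^{\eff}$ using Lemma~\ref{lemm:modules} (so that $\alpha[1/e]$ is an equivalence), while $\alpha[1/2]$ is handled via the $\pm$ decomposition together with the equivalence $\DMteZ^+\simeq\mathrm{DM}^{\eff}(k,\Z[1/2])$ from \cite[\S5]{Deglise16}. Note that this last step is precisely your adjunction argument, but it is only available \emph{after} inverting~$2$; the integral statement needs the additional effectivity input. As a smaller point, your citations in step~3 are off: R\"ondigs--\O stv\ae r and Spitzweck concern the module-category equivalence, whereas the fact that the classical right adjoint $\mathrm{DM}(k)\to\SH$ sends $\1$ to $H\Z$ is essentially definitional and holds integrally, so no $[1/e]$ is needed there.
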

\begin{proof}
Let $H = f_0 U(\Z)$. Then $\underline{\pi}_{0,0}(H) = \Z$ whereas $\underline{\pi}_{n,0}(H) = 0$ for $n \ne 0$. Also $\underline{\pi}_{-1,-1}(H) = (\underline{\pi}_{0,0}(H))_{-1} = 0$ and consequently $f_1 H = 0$, $s_0 H \simeq H$. The unit map $\1 \to U(\1) \to U(\Z)$ induces $\1 \to H$ and hence $\H\Z \simeq s_0(\1) \to s_0(H) \simeq H$. This is an equivalence since it is a map between objects in $\SH^{\eff,\heart}$ inducing an isomorphism on $\underline{\pi}_{0,0}(\bullet)$. We have thus found a canonical map $\alpha: \H\Z \to f_0 U(\Z) \to U(\Z)$, which we need to show is an equivalence. We show separately that $\alpha[1/e]$ and $\alpha[1/2]$ are equivalences; since $e \ne 2$ this is enough.

We claim that $U(\Z)[1/e] \in \SH^\eff$. This will imply that $\alpha[1/e]$ is an equivalence.
For $X \in \mathrm{Sm}_k$ we have $UM(X)[1/e] = \Sigma^\infty X_+ \wedge U(\1)[1/e]$, by the
previous lemma. In particular $UM(X)[1/e] \in \SH^\eff$. It follows that for $E
\in \DMte$ we get $U(E) \in \SH^\eff$ (indeed $U$ commutes with filtered colimits, being right adjoint to a functor preserving compact generators). This applies in particular to $E = \Z[1/e]$.

Recall that if $E \in \SH$, then $E[1/2]$ canonically splits into two spectra, which we denote by $E^+$ and $E^-$. They are characterised by the fact that the motivic Hopf map $\eta$ is zero on $E^+$ and invertible on $E^-$ \cite[Lemma 39]{bachmann-real-etale}.
Now consider $U(\Z)[1/2]$. The action of $\sKMW$ on $\underline{\pi}_{0,0}(U\Z) = \Z$ is by definition via the canonical epimorphism $\sKMW_0 \to \sKM_0= \Z$. This implies that $(U\Z)^- = 0$, just like $(H\Z)^- = 0$. On the other hand $\Z^+ \in \DMteZ^+ \simeq \mathrm{DM}^{\mathrm{eff}}(k,\Z[1/2])$ \cite[\S 5]{Deglise16} is the unit, by construction, whence $U\Z^+ = H\Z[1/2]$.
\end{proof}

\begin{exem}[Grayson's Motivic Cohomology]
In \cite{Suslin03}, Suslin proves that Grayson's definition of motivic cohomology coincides with Voevodsky's. To do so he proves that Grayson's complexes satisfy the cancellation theorem, and then employs an induction using poly-relative cohomology. We cannot resist pointing out that the second half of this argument is subsumed by our criterion. Indeed, it is easy to see that $K_0^\oplus$-presheaves admit framed transfers in the sense of \cite[\S 2]{Garkusha14}. Consequently the $\A^1$-localization functor for Grayson motives is given by $\mathrm{L}_{\Nis} \Cstar$ (\cite[Theorem 1.1]{Garkusha15}). Arguing exactly as in the proof of Corollary \ref{coro:effective} (using \cite[Remark 2.3]{Suslin03} instead of Proposition \ref{prop:ratcontractible}) we conclude that the spectrum $H\Z^{Gr}$ representing Grayson's motivic cohomology is effective. But $\Z^{Gr}(0) \simeq \Z$ and so $H\Z \simeq H\Z^{Gr}$, arguing as in the proof of Theorem \ref{thm:comparison}.
\end{exem}

\begin{exem}[GW-motives]
In \cite{Druzhinin17a}, there is defined a category of GW-motives $\mathrm{DM}^{\mathrm{GW}}(k)$, and the usual properties are established. Arguing very similarly to the proof of Proposition \ref{prop:ratcontractible}, one may show that the reduced GW-presheaf corresponding to $\gm^{\times n}$ is rationally contractible. Then, arguing as in Theorem \ref{thm:comparison} and Lemma \ref{lemm:modules}, using the main results of \cite{Druzhinin17a, Druzhinin17b, Druzhinin17c}, one may show that the spectrum representing $\1 \in \mathrm{DM}^{\mathrm{GW}}$ is $H\tilde{\Z}$ again, and that $\mathrm{DM}^{\mathrm{GW}}(k)$ is equivalent to the category of highly structured modules over $\H\tilde{\Z}$. In particular $\mathrm{DM}^{\mathrm{GW}}(k) \simeq \DMt$. We leave the details for further work.
\end{exem}

\begin{rem}
The assumption that $k$ is infinite in our results can be dropped by employing the techniques of \cite[Appendix B]{elmanto2017motivic}.
\end{rem}


\bibliography{General}{}
\bibliographystyle{plain}


\end{document}